\newtheorem{thm}{Theorem}[section]
\newtheorem{lem}[thm]{Lemma}
\newtheorem*{rem*}{Remark}
\theoremstyle{definition}
\numberwithin{equation}{section}
\newcommand{\fr}{\kappa}
\newcommand{\Z}{\mathbf{Z}}
\newcommand{\Mod}[1]{\ (\textup{mod}\ #1)}
\DeclareMathOperator{\res}{res}
\begin{document}

\title[]{Sums of Kloosterman sums in the prime geodesic theorem}

\author{Olga  Balkanova}
\address{Department of Mathematical Sciences, Chalmers University of Technology and University of Gothenburg, Chalmers tv\"{a}rgata 3, 412 96 Gothenburg, Sweden}
\email{olgabalkanova@gmail.com}

\author{Dmitry  Frolenkov}
\address{
National Research University Higher School of Economics, Moscow, Russia and
Steklov Mathematical Institute of Russian Academy of Sciences,  8 Gubkina st., Moscow, 119991, Russia}
\email{frolenkov@mi-ras.ru}
\thanks{Research of Dmitry Frolenkov is supported by the Foundation for the advancement of theoretical physics and Mathematics "BASIS" and by the Program of the Presidium of the Russian Academy of Sciences ¹01 'Fundamental Mathematics and its Applications' under grant PRAS-18-01.}

\begin{abstract}
We develop a new method for studying sums of Kloosterman sums related to the spectral exponential sum. As a corollary, we  obtain a new proof of the estimate of Soundararajan and Young for the error term in the prime geodesic theorem.
\end{abstract}

\keywords{spectral exponential sum; Kloosterman sum; prime geodesic theorem.}
\subjclass[2010]{Primary: 11F72; Secondary: 11L05, 11M06}

\maketitle

\tableofcontents

\section{Introduction}

The goal of this paper is to prove the following estimate for the sums of Kloosterman sums.
\begin{thm}\label{mainthm}
Let $\theta$ be a subconvexity exponent for Dirichlet $L$-functions of real primitive characters and let $S(n,n,q)$ denote the Kloosterman sum.
For $X,T\gg 1$ the following estimate holds
\begin{multline}\label{sum of Kloosterman22}
\frac{1}{N}\sum_{n}h(n)\sum_{q=1}^{\infty}\frac{S(n,n;q)}{q}\varphi\left(\frac{4\pi n}{q}\right)\ll\\ \max(X^{1/4+\theta/2}T^{3/2},X^{\theta/2}T^2)\log^{2}(XT)+
\frac{X^{1/4+\theta}T^{3/2}}{N^{1/2}}\left(1+\frac{T}{X^{1/2}}\right),
\end{multline}
where
\begin{equation}\label{phi def0}
\varphi(x)=\frac{\sinh^2\beta}{2\pi}x^2\exp(ix\cosh\beta)
\end{equation}
with
\begin{equation}\label{beta def}
\beta=\frac{1}{2}\log X+\frac{i}{2T}
\end{equation} and $h(x)$ is a smooth function supported in $[N,2N]$ for some $N>1$ such that
\begin{equation}\label{h conditions}
|h^{(j)}(x)|\ll N^{-j},\,\hbox{for}\,j=0,1,2,\ldots\, \int_{-\infty}^{\infty}h(x)dx=N.
\end{equation}
\end{thm}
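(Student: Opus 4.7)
The appearance of the subconvexity exponent $\theta$ for Dirichlet $L$-functions of \emph{real primitive} characters -- rather than for automorphic $L$-functions -- strongly suggests that the proof should avoid the Kuznetsov trace formula and instead treat the Kloosterman sum directly. I would open the Kloosterman sum as
\[
S(n,n;q)=\sum_{\substack{a\bar a\equiv 1\,(q)\\ (a,q)=1}} e\!\left(\frac{n(a+\bar a)}{q}\right),
\]
collect the terms by the residue $m\equiv a+\bar a \pmod q$, and apply Poisson summation in $n$ to the weight $h(n)\varphi(4\pi n/q)$.

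By stationary phase, the Fourier transform of $h(n)\varphi(4\pi n/q)$ is concentrated at the frequency conjugate to $2\cosh\beta\cdot(4\pi/q)/(2\pi)$; recalling that $\cosh\beta\approx\tfrac12 X^{1/2}$, Poisson replaces the $n$-sum by a sum indexed by an integer $k$ constrained to lie within a window of length $\asymp X^{1/4}$ of $-2X^{1/2}/q$, which is the source of the factor $X^{1/4}$ in (\ref{sum of Kloosterman22}). The smooth decay of $\tilde h$ beyond this window, combined with the scaling of $h$ at size $N$, provides the $N^{-1/2}$ saving in the second summand.

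The arithmetic heart of the argument is the identification of the inner sum over $a\pmod q$ subject to $a\bar a\equiv 1$ and $a+\bar a\equiv m$ as the number of solutions of the quadratic congruence $a^2-ma+1\equiv 0 \pmod q$, that is, as the number of representations of the discriminant $m^2-4$ by binary quadratic forms of level dividing $q$. Summing over $q$ and invoking the Gauss class-number formula then converts the expression into
\[
\sum_{m} W(m)\cdot L(1/2,\chi_{D_m}),\qquad D_m=m^2-4,
\]
plus contributions from the non-fundamental part of $D_m$, where $W(m)$ is the smooth Poisson weight localised to $|m|\lesssim T$ by the oscillatory factor $e^{ix\cosh\beta}$ in $\varphi$.

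Applying the hypothesis $L(1/2,\chi_d)\ll d^{\theta}$ term by term then produces the factor $X^{\theta/2}$ and, coupled with the spectral localisation $|m|\lesssim T$, yields the first regime $X^{1/4+\theta/2}T^{3/2}$; the second regime $X^{\theta/2}T^{2}$ arises when the stationary point of the Poisson weight falls outside the support of $h$, so that only the trivial bound on the length of the $m$-interval survives. The principal obstacle, I anticipate, will be the interface between the Poisson step and the class-number interpretation: one must extract the fundamental discriminant of $m^2-4$ uniformly in $m$ and $q$ (which costs logarithmic factors and requires careful bookkeeping of the divisor structure), and the subsequent $q$-sum must be organised so that a term-by-term application of subconvexity -- rather than a second-moment estimate -- already suffices to reach the bound in (\ref{sum of Kloosterman22}) without losing the $N^{-1/2}$ saving in the off-diagonal terms.
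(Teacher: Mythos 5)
Your arithmetic kernel is the right one and coincides with the paper's: opening $S(n,n;q)$, dualising the $n$-sum modulo $q$, and recognising that the complete sum over $a$ with $a+\bar a\equiv m$ counts roots of $x^2\equiv m^2-4\pmod{4q}$ is precisely Kuznetsov's identity (Lemma \ref{Kuznetsov lemma}), which converts the left-hand side of \eqref{sum of Kloosterman22} into $\sum_m\mathscr{L}_{m^2-4}(s)\Phi(m,s)$ with $\mathscr{L}$ Zagier's series. Two points in the analytic execution, however, would stop you short of the stated bound. First, the localisation is off: the dual variable $m$ is pinned near $2\cosh\beta\asymp X^{1/2}$ (not $|m|\lesssim T$), in a window of width $\asymp X^{1/2}/T$, so the discriminants in play have size $m^2\asymp X$. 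Moreover the class-number formula produces $L(1,\chi_D)$, not $L(1/2,\chi_D)$; central values enter only through the tail of the approximate functional equation \eqref{approximate func.eq.} and through the Mellin contour on $\Re s=1/2$, and those are the only places where $\theta$ is used.

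Second, and decisively: term-by-term estimation of the main sum cannot work. With $m\asymp X^{1/2}$ there are $\asymp X^{1/2}/T$ relevant terms of height up to $T^2$, so even the trivial bound $L(1,\chi_D)\ll\log D$ gives only $\sum_m|\Phi(m,1)\,\mathscr{L}_{m^2-4}(1)|\ll X^{1/2}T\log X$, far above $X^{1/4+\theta/2}T^{3/2}$ in the main range; inserting subconvexity term by term would in any case produce $X^{\theta}$, not $X^{\theta/2}$. The paper instead truncates $\mathscr{L}_{m^2-4}(1)$ to moduli $q\le V$ and exploits genuine cancellation in $m$ for fixed $q$, namely $\int_0^\infty\Phi(x,1)\,dx=0$ (see \eqref{Phi(x,1) integral3}) combined with the Soundararajan--Young average \eqref{sum of lambda on average}; subconvexity is applied only to the truncation error, and optimising $V=X^{\theta}(1+X^{1/2}/T)$ is what yields the exponent $\theta/2$. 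Your proposal is missing this cancellation mechanism and the auxiliary parameter $V$, and without them the first term of \eqref{sum of Kloosterman22} is out of reach. (The $N^{-1/2}$ saving, for the record, does come from $\tilde h(1/2+it)\ll N^{1/2}$ on the shifted contour, which is close to what you describe.)
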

\begin{rem*}
Throughout the paper we use the standard notation $A\ll B$ meaning that there  exists a constant $c>1$ such that $A\le cB.$
On the other hand, by writing $E\gg F$ we mean that there  exists a constant $c>1$ such that $E\ge cF.$
\end{rem*}
\begin{rem*}
Applying the Weil bound for Kloosterman sums, we obtain (for $N\gg TX^{-1/2}$) the estimate
\begin{equation}\label{sum of Kloosterman22 Weil}
\frac{1}{N}\sum_{n}h(n)\sum_{q=1}^{\infty}\frac{S(n,n;q)}{q}\varphi\left(\frac{4\pi n}{q}\right)\ll N^{1/2}X^{1/4}T^{3/2}\log(NX).
\end{equation}
Therefore, \eqref{sum of Kloosterman22} improves \eqref{sum of Kloosterman22 Weil} if
$$N\gg X^{\theta}\left(1+TX^{-1/2}\right).$$
\end{rem*}
The sum \eqref{sum of Kloosterman22} is particularly interesting because it is ultimately related to  the prime geodesic theorem, as we now explain.

The prime geodesic theorem gives an asymptotic formula as $X \rightarrow \infty$ for the number  $\pi_{\Gamma}(X)$  of primitive hyperbolic classes $\{P\}$ in $PSL_{2}(\Z)$ with norm $NP$  less than or equal to $X$.
In direct analogy with prime numbers, it is convenient to study the weighted counting function
$$\Psi_{\Gamma}(X)=\sum_{NP\leq X}\Lambda(P),$$ where the sum is over all  hyperbolic classes and $\Lambda(P)=\log{NP_0}$ if $\{P\}$ is a power of the primitive hyperbolic class $\{P_0\}$.

Iwaniec \cite[Lemma 1]{IwPG} proved for $1\le T\le X^{1/2}\log^{-2}X$ that
\begin{equation}\label{PrimeGeodesic to spec.sum}
\Psi_{\Gamma}(X)=X+2X^{1/2}\Re\left(\sum_{0<t_j\le T}\frac{X^{it_j}}{1/2+it_j}\right)+
O\left(\frac{X}{T}\log^2X\right),
\end{equation}
where $\kappa_j=1/4+t_{j}^{2}$ are the eigenvalues of the hyperbolic Laplacian for $PSL_2(\Z)$.
Exact formula  \eqref{PrimeGeodesic to spec.sum} provides a connection between $\Psi_{\Gamma}(X)$ and the spectral exponential sum
\begin{equation}\label{spec.exp.sum}
S(T,X)=\sum_{0<t_j\le T}X^{it_j}.
\end{equation}
It follows from Weyl's law that $S(T,X)\ll T^{2}$, and correspondingly $$\Psi_{\Gamma}(X)=X+O(X^{3/4}\log X).$$ The first non-trivial estimate
\begin{equation}\label{Iwaniec bound}
S(T,X)\ll TX^{11/48+\epsilon}
\end{equation}
was obtained by Iwaniec in \cite{IwPG} for $1\le T\le X^{1/2}\log^{-2}X$. Using \eqref {Iwaniec bound} and taking $T=X^{13/48}$ in \eqref{PrimeGeodesic to spec.sum},  we have
\begin{equation}\label{Iwaniec bound2}
\Psi_{\Gamma}(X)=X+O(X^{13/48+\epsilon}).
\end{equation}
In order to prove \eqref{Iwaniec bound},  Iwaniec showed that the problem can be reduced to the investigation of the smoothed sum
\begin{equation}\label{spec.exp.sum2}
\sum_{j}X^{it_j}\exp(-t_j/T).
\end{equation}
Consequently, using properties of the Rankin-Selberg $L$-function and introducing the additional parameter $N$, Iwaniec obtained the following decomposition
\begin{equation}\label{PrG =A+B+O}
\sum_{j}X^{it_j}\exp(-t_j/T)=\mathcal{A}+\mathcal{B}
+
O\left(T\log^2T+\frac{N^{1/2}\log^2N}{X^{1/2}}\right)
\end{equation}
with
\begin{equation}\label{A def}
\mathcal{A}=\frac{\pi^2}{12N}\sum_{n}h(n)\sum_{q=1}^{\infty}\frac{S(n,n;q)}{q}
\phi\left(\frac{4\pi n}{q}\right),
\end{equation}
\begin{equation}\label{B def}
\mathcal{B}=-\frac{\zeta(2)}{2N}
\frac{1}{2\pi i}\int_{(1/2)}\tilde{h}(s)\sum_{j}\hat{\phi}(t_j)\frac{L(u_{j}\otimes u_{j},s)}{\cosh(\pi t_j)}ds,
\end{equation}
where $h(x)$ is as in Theorem \ref{mainthm}, $\tilde{h}(s)$ is the Mellin transform of $h(x)$,
\begin{equation}\label{eq:phiLuoS}
\phi(x)=\frac{\sinh\beta}{\pi}x\exp(ix\cosh\beta), \quad
\beta=\frac{1}{2}\log X+\frac{i}{2T}
\end{equation}
and
\begin{equation}\label{phihat expression0}
\hat{\phi}(t)=\frac{\sinh(\pi t+2i\beta t)}{\sinh(\pi t)}=X^{it}\exp(-t/T)+O(\exp(-\pi t)).
\end{equation}

We remark that the original choice of $\phi(x)$ made by Iwaniec was different. However, the function \eqref{eq:phiLuoS} produces a smaller error term in the approximation \eqref{phihat expression0}, as has been shown by Deshouillers and Iwaniec in \cite[Lemmas 7 and 9]{DesIw}.

The next step is to optimise the choice of the parameter $N$ by proving the sharpest possible estimates on  $\mathcal{A}$ and $\mathcal{B}$.  To estimate the part $\mathcal{B}$, Iwaniec studied the first moment of Rankin-Selberg $L$-functions and proved an upper bound for this moment  that is slightly weaker than the mean Lindel\"{o}f estimate. Consequently,
\begin{equation}\label{Iwaniec Bbound}
\mathcal{B}\ll N^{-1/2}T^{5/2}\log^2T.
\end{equation}
Note that in order to break the $3/4$ barrier in the prime geodesic theorem, it is insufficient to combine \eqref{Iwaniec Bbound} with the trivial bound for the part $\mathcal{A}$, namely
\begin{equation}\label{LuoSarnak Abound}
\mathcal{A}\ll N^{1/2}T^{1/2}X^{1/4}\log T.
\end{equation}
For this reason,  Iwaniec analysed sums of Kloosterman sums in the part $\mathcal{A}$ using the Burgess bound for character sums, and taking $N$ sufficiently large ($N\gg T^{1+\epsilon}\gg X^{11/48+\epsilon}$), he finally proved \eqref{Iwaniec bound}.

The estimate  \eqref{Iwaniec bound} was further improved by Luo and Sarnak \cite{LuoSarnakPG} as follows
\begin{equation}\label{LuoSarnak bound}
S(T,X)\ll X^{1/8}T^{5/4}\log^2T, \quad T,X\gg1.
\end{equation}
More precisely, by proving the mean Lindel\"{o}f estimate for the first moment in the part $\mathcal{B}$, Luo and Sarnak showed that
\begin{equation}\label{LuoSarnak Bbound}
\mathcal{B}\ll N^{-1/2}T^{2}\log^2T.
\end{equation}
 Substituting \eqref{LuoSarnak Abound} and \eqref{LuoSarnak Bbound} to \eqref{PrG =A+B+O} yields that the optimal choice of $N$ is $N=T^{3/2}X^{-1/4}$.  This proves  \eqref{LuoSarnak bound} provided that $T>X^{1/6}.$  Finally, applying \eqref{LuoSarnak bound} to evaluate \eqref{PrimeGeodesic to spec.sum}, it turns out that the optimal choice of $T$ is $T=X^{3/10}$, and this gives
\begin{equation}\label{LuoSarnak bound2}
\Psi_{\Gamma}(X)=X+O(X^{7/10+\epsilon}).
\end{equation}
Consequently,   $N=X^{1/5}=T^{2/3}.$
Such a small value of $N$, on the one hand, makes it more difficult to show cancellations in sums of Kloosterman sums in the part $\mathcal{A}$, but on the other hand, it suffices to apply the trivial estimate \eqref{LuoSarnak Abound} based on the Weil bound in order to prove \eqref{LuoSarnak bound}.

The next improvement is due to Cai \cite{Cai}, who showed that for $ X^{1/10}\le T\le X^{1/3}$ the following estimate holds
\begin{equation}\label{Cai bound1}
S(T,X)\ll T^{2/5}X^{11/30+\epsilon}.
\end{equation}
The approach of \cite{Cai} combines the estimate \eqref{LuoSarnak Bbound}  for the part $\mathcal{B}$ and non-trivial analysis of the part $\mathcal{A}$ via the Burgess bound for character sums, which is possible because the parameter $N$ is sufficiently large, namely $N=T^{1+\epsilon}.$ Note that \eqref{Cai bound1} improves  \eqref{LuoSarnak bound} if $T>X^{29/102+\epsilon}.$ This means that Cai obtained a non-trivial estimate on the part $\mathcal{A}$ for $N>X^{29/102+\epsilon}.$  It follows from \eqref{PrimeGeodesic to spec.sum}, \eqref{Cai bound1} and \eqref{LuoSarnak bound} that
\begin{equation}\label{Cai bound2}
\Psi_{\Gamma}(X)=X+O(X^{71/102+\epsilon}).
\end{equation}

Finally, Soundararajan and Young \cite{SY} proved the prime geodesic theorem in the strongest known form
\begin{equation}\label{PrimeGeodesic bound}
\Psi_{\Gamma}(X)=X+O\left(X^{2/3+\theta/6}\log^3X\right),\quad  \theta=1/6+\epsilon.
\end{equation}
The proof is based on the estimate \eqref{LuoSarnak bound} and a formula relating $\Psi_{\Gamma}(X)$ with sums of generalized Dirichlet $L$-functions.

Another interesting question related to the prime geodesic theorem is the correct order of magnitude of the spectral exponential sum \eqref{spec.exp.sum}.
In \cite[Conj. 2.2]{PetRisLaak} Petridis and Risager conjectured that for $X,T\gg 1$
\begin{equation}\label{PetrisdisRisager conj}
S(T,X)\ll T(TX)^{\epsilon}.
\end{equation}
For a fixed $X$ and $T\to\infty$ the conjecture  was proved by Laaksonen in the appendix of \cite{PetRisLaak}.

The following estimates proved for $X,T \gg 1$ in \cite{BF}  confirm the conjecture of Petridis and Risager in some ranges

\begin{equation}\label{spec.exp.sum new bound}
S(T,X)\ll \max\left(
X^{1/4+\theta/2}T^{1/2},
X^{\theta/2}T
\right)\log^{3}T,
\end{equation}
\begin{equation}\label{spec.exp.sum new bound2}
S(T,X)\ll T\log^{2}T\quad\hbox{if}\quad T>\frac{X^{1/2+7\theta/6}}{\fr(X)},
\end{equation}
where
\begin{equation}\label{fr(X) def}
\fr(X)=\left\|X^{1/2}+X^{-1/2}\right\|
\end{equation}
and $\|x\|$ denotes the distance from $x$ to the nearest integer.
Furthermore, combining \eqref{spec.exp.sum new bound} with \eqref{LuoSarnak bound},  we obtain a new proof of \eqref{PrimeGeodesic bound}.

Estimates \eqref{spec.exp.sum new bound} and  \eqref{spec.exp.sum new bound2} follow from the nontrivial bound
for the part $\mathcal{B}$
\begin{multline}\label{symsquare estimate0}
\mathcal{B}\ll \frac{T\log^3T}{N^{1/2}}+\\
\frac{X^{\theta}}{X^{1/4}N^{1/2}}\left(
X^{1/2}\min\left(T,\frac{X^{1/2}}{\fr(X)}\right)^{1/2}+
\min\left(T,\frac{X^{1/2}}{\fr(X)}\right)^{3/2}\right),
\end{multline}
while the part $\mathcal{A}$ is estimated using Weil's bound as in \eqref{LuoSarnak Abound}. Consequently, the optimal choice of $N$ is $N=X^{\theta}.$

An obvious idea is to try to improve also the trivial estimate on $\mathcal{A}$. Nevertheless, it turns out that the Weil bound gives a stronger result in the required range than Iwaniec's method based on the Burgess bound for character sums.
Indeed, the estimate  \eqref{spec.exp.sum new bound} is better than \eqref{LuoSarnak bound} only if $T>X^{1/6+2\theta/3+\epsilon}$. In order to establish cancellations in character sums using the Burgess bound, it is required that $N\gg T$. Thus $N\gg T>X^{1/6+2\theta/3+\epsilon}$. However, the optimal choice of $N$ is $N=X^{\theta}$, which is much less than $X^{1/6+2\theta/3+\epsilon}.$

The aim of the present paper is to develop a new method for estimating the part $\mathcal{A}$, which is based on some ideas of Kuznetsov \cite{K}.  As a result, we obtain Theorem \ref{mainthm} and a new proof of \eqref{PrimeGeodesic bound}.

Due to some problems with convergence, it is required to replace the function $\phi(x)$ defined by \eqref{eq:phiLuoS} with $\varphi(x)$ (see \eqref{phi def0}), which decays faster at $x=0.$  As will be shown later, in this case
\begin{multline}\label{phihat expression1}
\hat{\varphi}(t)=t\frac{\cosh(\pi t+2i\beta t)}{\sinh(\pi t)}+
\frac{i\sinh(\pi t+2i\beta t)}{2\tanh\beta\sinh(\pi t)}=\\
tX^{it}\exp(-t/T)+\frac{i\cosh\beta}{2\sinh\beta}X^{it}\exp(-t/T)+O(\exp(-\pi t)).
\end{multline}
Consequently, we study the sum
$$\sum_{0<t_j\le T}t_jX^{it_j}$$
instead of \eqref{spec.exp.sum}.  The approach of Iwaniec \cite{IwPG} (see also \cite[Section 6]{LuoSarnakPG} for more details) yields
\begin{equation}\label{PrG1 =A+B+O}
\sum_{j}\hat{\varphi}(t_j)=\mathcal{A}_1+\mathcal{B}_1+
O\left(T^2\log^2T+\frac{N^{1/2}\log^2N}{X^{1/2}}\right)
\end{equation}
with
\begin{equation}\label{A1 def}
\mathcal{A}_1=\frac{\pi^2}{12N}\sum_{n}h(n)\sum_{q=1}^{\infty}\frac{S(n,n;q)}{q}
\varphi\left(\frac{4\pi n}{q}\right),
\end{equation}
\begin{equation}\label{B1 def}
\mathcal{B}_1=-\frac{\zeta(2)}{2N}
\frac{1}{2\pi i}\int_{(1/2)}\tilde{h}(s)\sum_{j}\hat{\varphi}(t_j)\frac{L(u_{j}\otimes u_{j},s)}{\cosh(\pi t_j)}ds.
\end{equation}
Note that $\mathcal{B}_1\ll T\mathcal{B}$, and therefore, all previous estimates on the part $\mathcal{B}$ are valid for $\mathcal{B}_1$ being multiplied by $T.$

In fact, the estimate \eqref{sum of Kloosterman22} is very strong as it allows us to prove the following theorem even by using  the original estimate \eqref{Iwaniec Bbound} of Iwaniec on the part $\mathcal{B}$, while the proofs given in \cite{SY} and \cite{BF} rely crucially on the mean Lindel\"{o}f estimate \eqref{LuoSarnak Bbound} by Luo and Sarnak.
\begin{thm}\label{thm:spec.exp.sum new bound2} For $X,T\gg 1$ the following holds
\begin{equation}\label{spec.exp.sum new bound3}
\sum_{t_j\le T}t_jX^{it_j}\ll \max\left(
X^{1/4+\theta/2}T^{3/2},
X^{\theta/2}T^2
\right)\log^{3}(XT).
\end{equation}
\end{thm}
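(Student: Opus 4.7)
The plan is to feed Theorem~\ref{mainthm} into the decomposition \eqref{PrG1 =A+B+O} and balance it against Iwaniec's first-moment bound to control $\mathcal{B}_1$. Concretely, Theorem~\ref{mainthm} directly bounds $\mathcal{A}_1$ by
$$\mathcal{A}_1 \ll \max\bigl(X^{1/4+\theta/2}T^{3/2},\, X^{\theta/2}T^{2}\bigr)\log^{2}(XT) + N^{-1/2}X^{1/4+\theta}T^{3/2}\bigl(1+T/X^{1/2}\bigr),$$
and, since $\mathcal{B}_1\ll T\mathcal{B}$, Iwaniec's estimate \eqref{Iwaniec Bbound} gives $\mathcal{B}_1\ll N^{-1/2}T^{7/2}\log^{2}T$. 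So I need to choose $N$ so that the $\mathcal{B}_1$ contribution, the $\mathcal{A}_1$ remainder, and the auxiliary error $N^{1/2}\log^{2}N/X^{1/2}$ are all absorbed into the target $\max(X^{1/4+\theta/2}T^{3/2},X^{\theta/2}T^{2})\log^{3}(XT)$.

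The optimization splits according to the max. For $T\le X^{1/2}$ the target is $X^{1/4+\theta/2}T^{3/2}$; I take $N=X^{\theta}$ when $T\le X^{1/8+\theta/2}$ and $N=X^{-1/2-\theta}T^{4}$ when $X^{1/8+\theta/2}\le T\le X^{1/2}$, which makes $\mathcal{B}_{1}$ and the $\mathcal{A}_{1}$ remainder each $\ll X^{1/4+\theta/2}T^{3/2}$. For $T\ge X^{1/2}$ the target is $X^{\theta/2}T^{2}$; choosing $N=X^{-\theta}T^{3}$ yields $\mathcal{B}_{1}\ll X^{\theta/2}T^{2}$ and $N^{-1/2}X^{-1/4+\theta}T^{5/2}\ll X^{\theta/2}T^{2}$. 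In every regime $N\le X^{3/2+\theta}T^{3}$, so the error $N^{1/2}/X^{1/2}$ is harmless. Ranges in which the chosen $N$ would fall below $1$ correspond to $T$ so small that the trivial bound $\sum_{t_{j}\le T}t_{j}\ll T^{3}$ from Weyl's law already matches the target, and may be handled separately. This produces
$$\sum_{j}\hat{\varphi}(t_{j})\ll \max\bigl(X^{1/4+\theta/2}T^{3/2},\,X^{\theta/2}T^{2}\bigr)\log^{3}(XT).$$

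To pass from this to the statement of the theorem, I use the expansion \eqref{phihat expression1}. Since $|i\cosh\beta/(2\sinh\beta)|$ is bounded, the secondary term contributes at worst $|S(T,X)|$, which by \eqref{spec.exp.sum new bound} is dominated by the target; the $O(\exp(-\pi t))$ tail contributes $O(1)$. Thus the same bound holds for the smoothed sum $\sum_{j}t_{j}X^{it_{j}}\exp(-t_{j}/T)$. Removing the exponential weight and replacing it by the sharp cutoff $t_{j}\le T$ is done by a standard smoothing/partial-summation argument: for $t_{j}\le T$, the weight $\exp(-t_{j}/T)$ differs from $1$ by $O(t_{j}/T)$ which produces a manageable integral of $\sum_{t_{j}\le u}t_{j}X^{it_{j}}$ over $u\in[0,T]$ (bounded dyadically by applying the same machinery on shorter ranges), while the tail $t_{j}>T$ is handled using the exponential decay of $\exp(-t_{j}/T)$ combined with Weyl's law.

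The real work has already been done in Theorem~\ref{mainthm}; the only obstacles here are the case analysis in choosing $N$ and the desmoothing step. Neither involves new ideas: the former is routine optimization, and the latter is the standard transition from the Selberg-type smoothed sum to the sharp-cutoff sum, identical in spirit to the transition from \eqref{spec.exp.sum2} to \eqref{spec.exp.sum} used throughout this line of work. This matches the paper's remark that, thanks to the strength of Theorem~\ref{mainthm}, even the original Iwaniec bound \eqref{Iwaniec Bbound} on $\mathcal{B}$ suffices in place of the Luo--Sarnak mean Lindel\"of estimate.
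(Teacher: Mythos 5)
Your proposal is correct and follows essentially the same route as the paper: Theorem \ref{mainthm} bounds $\mathcal{A}_1$ in \eqref{PrG1 =A+B+O}, the Iwaniec-type bound $\mathcal{B}_1\ll N^{-1/2}T^{7/2}\log^2T$ handles $\mathcal{B}_1$, one optimizes $N$, and the sharp cutoff is recovered by the standard Luo--Sarnak desmoothing. The only cosmetic difference is that the paper uses the single uniform choice $N=[T^{7/2}X^{1/2}+X^{3/4+\theta}T^{3/2}]$ in place of your case analysis (and the trivial Weyl-law bound $\sum_j X^{it_j}e^{-t_j/T}\ll T^2$ for the secondary term of $\hat\varphi$, rather than \eqref{spec.exp.sum new bound}); both verifications go through.
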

Note that  \eqref{spec.exp.sum new bound}  (multiplied by $T$) and \eqref{spec.exp.sum new bound3} are of the same quality. Therefore, as a consequence, we obtain one more proof of the prime geodesic theorem in the strongest known form \eqref{PrimeGeodesic bound}. We remark that it is not possible to improve \eqref{PrimeGeodesic bound} further by combing \eqref{sum of Kloosterman22} with the strongest known estimate for the part $\mathcal{B}$.
The reason is that the largest contribution to the final result  comes from the first summand on the right-hand side of  \eqref{sum of Kloosterman22}, and this summand  does not depend on $N.$ This eliminates the possibility of improvement  by optimising the additional parameter $N$,  which was the main idea of Iwaniec's approach.


\section{Notation and preliminary results}\label{section:notation}

Introduce the following notation
\begin{equation*}
\mathop{{\sum}^*}_{n=0}^{\infty}a_n=\frac{a_0}{2}+\sum_{n=1}^{\infty}a_n.
\end{equation*}
For a function $f(x)$ let
\begin{equation*}
\tilde{f}(s)=\int_0^{\infty}f(x)x^{s-1}dx
\end{equation*}
be its Mellin transform.
Let $e(x)=exp(2\pi ix)$. The classical Kloosterman sum
\begin{equation*}
S(n,m;c)=\sum_{\substack{a\pmod{c}\\ (a,c)=1}}e\left( \frac{an+a^*m}{c}\right), \quad aa^*\equiv 1\pmod{c},
\end{equation*}
satisfies Weil's bound (see \cite[Theorem 4.5]{Iw})
\begin{equation}\label{Weilbound}
|S(m,n;c)|\leq \tau_0(c)\sqrt{(m,n,c)}\sqrt{c}.
\end{equation}
Consider the generalized Dirichlet $L$-function
\begin{equation}\label{Lbyk}
\mathscr{L}_{n}(s)=\frac{\zeta(2s)}{\zeta(s)}\sum_{q=1}^{\infty}\frac{\rho_q(n)}{q^{s}}=\sum_{q=1}^{\infty}\frac{\lambda_q(n)}{q^{s}},\quad \Re{s}>1,
\end{equation}
where
\begin{equation}
\rho_q(n):=\#\{x\Mod{2q}:x^2\equiv n\Mod{4q}\},
\end{equation}
\begin{equation}
\lambda_q(n):=\sum_{q_{1}^{2}q_2q_3=q}\mu(q_2)\rho_{q_3}(n).
\end{equation}

Zagier \cite[Proposition 3]{Z} showed that \eqref{Lbyk} can be meromorphically continued to the whole complex plane. Furthermore, it was proved in \cite[Proposition 3]{Z} that  $\mathscr{L}_n(s)$ is identically zero if $n \equiv 2,3 \Mod{4}.$ For $n=0$ we have
\begin{equation}
\mathscr{L}_{0}(s)=\zeta(2s-1).
\end{equation}
Otherwise, for $n=Dl^2$ with $D$ fundamental discriminant the following decomposition holds
\begin{equation}\label{ldecomp}
\mathscr{L}_{n}(s)=l^{1/2-s}T_{l}^{(D)}(s)L(s,\chi_D),
\end{equation}
where $L(s,\chi_D)$ is a Dirichlet $L$-function for primitive quadratic character $\chi_D$ and
\begin{equation}\label{eq:td}
T_{l}^{(D)}(s)=\sum_{l_1l_2=l}\chi_D(l_1)\frac{\mu(l_1)}{\sqrt{l_1}}\tau_{s-1/2}(l_2).
\end{equation}

It follows from \eqref{ldecomp} that if $n$ is non-zero and is not a full square, then $\mathscr{L}_{n}(s)$ is an entire function.
Another consequence of \eqref{ldecomp} is that for any $n$ and some constant $A>0$ one has
\begin{equation}\label{eq:subconvexity}
\mathscr{L}_n(1/2+it)\ll (1+|n|)^{\theta}(1+|t|)^{A},
\end{equation}
where $\theta$ is the best known result towards the Lindel\"{o}f hypothesis for Dirichlet $L$-functions of real primitive characters. Conrey and Iwaniec \cite{CI} showed that $\theta=1/6+\epsilon$ is admissible for any $\epsilon>0$ and Young \cite{Y} proved the hybrid bound with $\theta=A=1/6+\epsilon$.

For $V\ge1$ define the series
\begin{equation}\label{SV def}
S_V(n^2-4):=\sum_{q=1}^{\infty}\frac{\lambda_q(n^2-4)}{q}\exp(-q/V).
\end{equation}
It was shown in  \cite[Eqs. (1.6)-(1.8), p. 723]{B}, \cite[p. 116, line 2]{SY} that for $V>0$ and $n\neq2$ one has
\begin{equation}\label{approximate func.eq.}
\mathscr{L}_{n^2-4}(1)=
S_V(n^2-4)-\frac{1}{2\pi i}\int_{(-1/2)}\mathscr{L}_{n^2-4}(1+s)V^s\Gamma(s)ds.
\end{equation}
We will also use \cite[Lemma 2.3]{SY}, which states that for  $q=a^2b$ with $b$ square-free
\begin{equation}\label{sum of lambda}
\sum_{2<n\le z}\lambda_q(n^2-4)=z\frac{\mu(b)}{b}+O(q^{1/2+\epsilon})\text{ for any }z\ge 2.
\end{equation}

\begin{lem}
For $z\ge 2, Q\ge1$ the following estimate holds
\begin{equation}\label{sum of lambda on average}
\sum_{\substack{q\le Q\\q=a^2b}}\left(\sum_{2<n\le z}\lambda_q(n^2-4)-z\frac{\mu(b)}{b}\right)\ll Q^{3/2}\log^2Q.
\end{equation}
\end{lem}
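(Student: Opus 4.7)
The pointwise bound \eqref{sum of lambda} gives, by direct summation, only $O(Q^{3/2+\epsilon})$, so our task is to reduce $Q^{\epsilon}$ to $\log^2 Q$ by exploiting cancellation in $q$. The plan is to swap the orders of summation over $q$ and $n$ \emph{before} invoking the character-sum input underlying \eqref{sum of lambda}, so that divisor-function weights in the error can be summed with genuine logarithmic savings.

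Starting from the defining convolution $\lambda_q(m)=\sum_{q_1^2q_2q_3=q}\mu(q_2)\rho_{q_3}(m)$, I would set $r=q_3$ and $d=q_1^2q_2$ and note that $d\mapsto\sum_{q_1^2q_2=d}\mu(q_2)$ is the Liouville function $\ell$. Exchanging orders of summation then rewrites the left-hand side of \eqref{sum of lambda on average} as
\[
\sum_{r\le Q}\Bigl(\sum_{d\le Q/r}\ell(d)\Bigr)\sum_{2<n\le z}\rho_r(n^2-4)\;-\;z\sum_{q\le Q}\frac{\mu(b)}{b}.
\]
I would then establish the refined inner-sum asymptotic
\[
\sum_{2<n\le z}\rho_r(n^2-4)=z\,c(r)+O\!\bigl(r^{1/2}\tau(r)\log r\bigr),\qquad c(r):=\sum_{ab=r}\frac{\mu(b)}{b},
\]
by a Polya--Vinogradov-type estimate for the Jacobi-symbol sum arising from $\rho_r$, or equivalently a lattice-point count on the hyperbola $(n-x)(n+x)\equiv 4\Mod{4r}$.

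To handle the main term, I would verify via the Dirichlet series identities
\[
\sum_{r\ge 1}\frac{c(r)}{r^s}=\frac{\zeta(s)}{\zeta(s+1)},\qquad \sum_{d\ge 1}\frac{\ell(d)}{d^s}=\frac{\zeta(2s)}{\zeta(s)},\qquad \sum_{q\ge 1}\frac{\mu(b)/b}{q^s}=\frac{\zeta(2s)}{\zeta(s+1)}
\]
that $(c\ast\ell)(q)=\mu(b)/b$ for $q=a^2b$ with $b$ squarefree. This yields $z\sum_{r\le Q}c(r)\sum_{d\le Q/r}\ell(d)=z\sum_{q\le Q}\mu(b)/b$, so the two main terms cancel. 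The surviving error, bounded by the trivial estimate $|\sum_{d\le Q/r}\ell(d)|\le Q/r$ together with the hyperbola identity
\[
\sum_{r\le Q}\frac{\tau(r)}{r^{1/2}}=\sum_{de\le Q}\frac{1}{(de)^{1/2}}\ll Q^{1/2}\log Q,
\]
is then
\[
\sum_{r\le Q}\frac{Q}{r}\cdot r^{1/2}\tau(r)\log r\ll Q\log Q\cdot Q^{1/2}\log Q=Q^{3/2}\log^2 Q.
\]

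The main obstacle will be producing the refined inner-sum estimate with an explicit $\tau(r)$-dependence in the error: the underlying calculation in \cite[Lemma 2.3]{SY} is already of this quality, but the divisor weights are normally absorbed into $r^{\epsilon}$ and must instead be tracked through the $q_1^2q_2q_3$-decomposition. Once the $\tau(r)$ is exposed, the Dirichlet-series identification of the main term and the elementary divisor-sum bound above close the estimate in a few lines.
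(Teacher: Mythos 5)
Your proof is correct in substance but takes a genuinely different route from the paper's. The paper does not re-derive an equidistribution statement for $\rho_r$: it quotes the exponential-sum identity from the proof of \cite[Lemma 2.3]{SY} for $\lambda_q$ itself,
\begin{equation*}
\sum_{q\le Q}\Bigl(\sum_{n\le z}\lambda_q(n^2-4)-z\frac{\mu(b)}{b}\Bigr)=\sum_{q\le Q}\sum_{q_1^2q_2=q}\frac{1}{q_2}\sum_{\substack{k(q_2)\\ k\neq0}}S(k^2,1;q_2)\sum_{n\le z}e\Bigl(\frac{kn}{q_2}\Bigr)+O(Q),
\end{equation*}
applies $\sum_{n\le z}e(kn/q_2)\ll\|k/q_2\|^{-1}$ and Weil's bound, and is left with $\sum_{q_1^2q_2\le Q}q_2^{1/2}\tau_0(q_2)\log q_2\ll Q^{3/2}\log^2Q$. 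You instead peel off only the $\rho_{q_3}$-factor from the convolution, prove a refined form of \cite[Lemma 2.3]{SY} for $\rho_r$ alone, and verify the main-term cancellation by Dirichlet series; your identities are all correct ($c(r)=\phi(r)/r$, and both sides of $(c\ast\ell)(q)=\mu(b)/b$ have Dirichlet series $\zeta(2s)/\zeta(s+1)$), and the final summation closes as you describe. The mechanism is identical in both proofs --- keep the divisor weights explicit rather than absorbing them into $q^{\epsilon}$, and sum them only at the very end --- but your decomposition costs you one extra lemma, which is also the only soft spot. For that lemma the natural tool is not P\'olya--Vinogradov but the identity $\sum_{l=1}^{r}S(l,l;r)\cos(2\pi ln/r)=r\rho_r(n^2-4)$, which the paper proves anyway as \eqref{sum Kloosterman sum to pho}: the term $l=r$ contributes the density $\phi(r)/r=c(r)$, and the remaining terms give, via $\|l/r\|^{-1}$ and Weil, an error $\ll r^{1/2}\tau_0(r)\log r\sum_{d\mid r}d^{-1/2}$ rather than the $r^{1/2}\tau(r)\log r$ you state --- the factor $\sqrt{(l,r)}$ in Weil's bound cannot be dropped. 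This extra factor is harmless, but only if you sum it as a convolution, $\sum_{r\le Q}\tau_0(r)\sum_{d\mid r}d^{-1/2}\ll Q\log Q$, so that partial summation still yields $Q^{1/2}\log Q$ against $r^{-1/2}$; bounding $\sum_{d\mid r}d^{-1/2}$ crudely by $\tau(r)$ would cost you two logarithms in \eqref{sum of lambda on average}.
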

\begin{proof}
It follows from the proof of \cite[Lemma 2.3]{SY} that
\begin{multline*}\label{sum of lambda on average2}
\sum_{q\le Q}\left(\sum_{n\le z}\lambda_q(n^2-4)-z\frac{\mu(b)}{b}\right)=\\
\sum_{q\le Q}\sum_{q_1^2q_2=q}\frac{1}{q_2}\sum_{\substack{k(q_2)\\k\neq0}}S(k^2,1;q_2)
\sum_{n\le z}e\left(k\frac{n}{q_2}\right)+O(Q).
\end{multline*}
Applying the estimate $\sum_{n\le z}e\left(k\frac{n}{q_2}\right)\ll\|k/q_2\|^{-1}$ and Weil's bound \eqref{Weilbound} we obtain
\begin{multline*}
\sum_{q\le Q}\left(\sum_{n\le z}\lambda_q(n^2-4)-z\frac{\mu(b)}{b}\right)\ll\\
\sum_{q_1^2q_2\le Q}q_2^{1/2}\tau_0(q_2)\log q_2\ll Q^{3/2}\log^2Q.
\end{multline*}
\end{proof}


Let $\varphi(x)$ be a smooth function on $[0,\infty)$ such that
\begin{equation*}
\varphi(0)=0,\quad \varphi^{(j)}(x)\ll(1+x)^{-2-\epsilon}, \quad j=0,1,2.
\end{equation*}
Define
\begin{equation}\label{phi0-Def}
\varphi_0=\frac{1}{2\pi}\int_0^{\infty}J_0(y)\varphi(y)dy,
\end{equation}

\begin{equation}\label{phiB-Def}
\varphi_B(x)=\int_0^{1}\int_0^{\infty}\xi xJ_0(\xi x)J_0(\xi y)\varphi(y)dyd\xi,
\end{equation}

\begin{equation}\label{phiHat-Def}
\hat{\varphi}(t)=\frac{\pi i}{2\sinh(\pi t)}\int_0^{\infty}(J_{2it}(x)-J_{-2it}(x))\varphi(x)\frac{dx}{x}.
\footnote{Note that there is a typo (the imaginary unit $i$ is placed in the denominator instead of the numerator) in \cite[p. 68, line -1]{DesIw} and \cite[p. 233, line -3]{LuoSarnakPG} in the definition of $\hat{\varphi}(t)$.}
\end{equation}

In order to avoid convergence problems,  we modify slightly the choice of  $\varphi(x)$ that was made in \cite{DesIw} and \cite{LuoSarnakPG}. For $X,T\gg1$ let
\begin{equation}\label{phi def}
\varphi(x):=\frac{\sinh^2\beta}{2\pi}x^2\exp(ix\cosh\beta)
\end{equation}
with
\begin{equation}\label{beta def}
\beta:=\frac{1}{2}\log X+\frac{i}{2T}.
\end{equation}
It is convenient to introduce the following notation
\begin{equation}\label{c def}
c:=-i\cosh\beta=a-ib,
\end{equation}
\begin{equation}\label{a,b def}
\begin{cases}
a:=\sinh(\log\sqrt{X})\sin((2T)^{-1}),\\
b:=\cosh(\log\sqrt{X})\cos((2T)^{-1}).
\end{cases}
\end{equation}
Note that
\begin{equation}\label{argc}
\arg{c}=-\pi/2+\gamma,\quad 0<\gamma, \quad  T^{-1}\ll\gamma \ll T^{-1}.
\end{equation}

\begin{lem}
For $X,T\gg1, t>0$ the following holds
\begin{multline}\label{phihat expression}
\hat{\varphi}(t)=t\frac{\cosh(\pi t+2i\beta t)}{\sinh(\pi t)}+
\frac{i\sinh(\pi t+2i\beta t)}{2\tanh(\beta)\sinh(\pi t)}=\\
tX^{it}\exp(-t/T)+\frac{i\cosh\beta}{2\sinh\beta}X^{it}\exp(-t/T)+O(\exp(-\pi t)),
\end{multline}
\begin{equation}\label{phi0 expression}
\varphi_0=\frac{-i}{4\pi^2}\left(\frac{2}{\sinh\beta}+\frac{3}{\sinh^3\beta}\right)\ll X^{-1/2},
\end{equation}
\begin{multline}\label{phiBbound}
\varphi_B(x)=
\frac{-i\sinh^2\beta}{2\pi}\int_0^{1}\xi xJ_0(\xi x)\\ \times
\left(\frac{2}{(\cosh^2\beta-\xi^2)^{3/2}}+\frac{3\xi^2}{(\cosh^2\beta-\xi^2)^{5/2}}\right)d\xi
\ll X^{-1/2}\min(x,x^{1/2}).
\end{multline}
\end{lem}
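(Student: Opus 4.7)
The plan is to derive all three assertions from one analytic input: the classical Laplace--Bessel identity
\[
\int_0^\infty J_\nu(x)\,e^{-cx}\,dx=\frac{(\sqrt{c^2+1}-c)^\nu}{\sqrt{c^2+1}},\qquad \Re c>0,\ \Re\nu>-1,
\]
together with its $c$-derivatives, which supply closed forms for $\int_0^\infty xJ_\nu(x)e^{-cx}\,dx$ and $\int_0^\infty y^2J_0(\xi y)e^{-cy}\,dy$. Setting $c:=-i\cosh\beta$ gives $\Re c=\Im\cosh\beta>0$ for $X,T\gg1$, and $c^2+1=-\sinh^2\beta$; the principal branch then yields $\sqrt{c^2+1}=-i\sinh\beta$ and $\sqrt{c^2+1}-c=ie^{-\beta}$.

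For \eqref{phihat expression}, I would substitute $\nu=\pm 2it$ into $\int_0^\infty xJ_\nu(x)e^{-cx}dx=(u-c)^\nu(\nu u+c)/u^3$ (obtained by a single $c$-derivative, with $u=\sqrt{c^2+1}=-i\sinh\beta$) and use $(ie^{-\beta})^{\pm 2it}=e^{\mp\pi t\mp 2i\beta t}$ to obtain closed forms for both integrals in \eqref{phiHat-Def}. Subtracting them, the four resulting exponentials reorganize as $\cosh(\pi t+2i\beta t)$ and $\sinh(\pi t+2i\beta t)$ with coefficients proportional to $t\sinh\beta$ and $\cosh\beta$; dividing by $2\sinh(\pi t)\sinh\beta$ produces the stated closed form. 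For the asymptotic line, dividing numerator and denominator of the two quotients by $e^{\pi t}$ shows $\cosh(\pi t+2i\beta t)/\sinh(\pi t)=e^{2i\beta t}+O(e^{-2\pi t+t/T})$, and likewise for the $\sinh$ quotient; the identity $e^{2i\beta t}=X^{it}e^{-t/T}$ then yields the displayed main terms with error $O(e^{-\pi t})$.

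For \eqref{phi0 expression} and \eqref{phiBbound} I would use the twice-differentiated identity
\[
\int_0^\infty y^2 J_0(\xi y)\,e^{-cy}\,dy=\frac{2c^2-\xi^2}{(c^2+\xi^2)^{5/2}}.
\]
Since $\cosh^2\beta$ has argument $\approx 1/T$, the quantity $c^2+\xi^2=-(\cosh^2\beta-\xi^2)$ sits just below the negative real axis, and the principal branch forces $(c^2+\xi^2)^{5/2}=-i(\cosh^2\beta-\xi^2)^{5/2}$. Combined with $2c^2-\xi^2=-(2\cosh^2\beta+\xi^2)$ and the algebraic split $2\cosh^2\beta+\xi^2=2(\cosh^2\beta-\xi^2)+3\xi^2$, this expresses the integral as $-i$ times the bracketed sum in \eqref{phiBbound}. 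Setting $\xi=1$ and using $\sinh^2\beta=\cosh^2\beta-1$ recovers \eqref{phi0 expression}, and $|\sinh\beta|\sim\sqrt X/2$ immediately gives $|\varphi_0|\ll X^{-1/2}$.

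For the bound on $\varphi_B(x)$, the prefactors $|\sinh^2\beta|\cdot|\cosh^2\beta-\xi^2|^{-3/2}\ll X\cdot X^{-3/2}=X^{-1/2}$ uniformly for $\xi\in[0,1]$, and likewise for the $(\cdots)^{-5/2}$ term since $\xi^2\le 1$; the residual integral $\int_0^1\xi x|J_0(\xi x)|\,d\xi$ is handled by splitting at $\xi=1/x$ when $x>1$ and applying $|J_0(y)|\ll\min(1,y^{-1/2})$, giving $O(x)$ for $x\le1$ and $O(x^{1/2})$ for $x>1$. The one genuinely delicate step throughout is the branch tracking just described; once one commits to the principal-branch relation $\sqrt{c^2+\xi^2}=-i\sqrt{\cosh^2\beta-\xi^2}$, the rest of the calculation reduces to routine algebra and elementary Bessel estimates.
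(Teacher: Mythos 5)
Your proposal is correct and follows essentially the same route as the paper: the paper's starting identity \cite[Eq. 6.6621.1]{GR} is exactly your Laplace--Bessel transform evaluated at $c=-i\cosh\beta$, and the paper likewise produces the weighted integrals $\int_0^\infty x\,J_{2it}(x)e^{ix\cosh\beta}\,dx$ and $\int_0^\infty x^2J_0(\xi x)e^{ix\cosh\beta}\,dx$ by differentiation (with respect to $\beta$ rather than $c$, which differs only by a chain-rule factor). Your explicit branch tracking and the elementary $\min(x,x^{1/2})$ estimate via $|J_0(y)|\ll\min(1,y^{-1/2})$ correctly supply the details that the paper delegates to \cite[Eqs. 7.5, Lemma 11]{DesIw}.
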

\begin{proof}
The proof is similar to  \cite[Lemma 7]{DesIw}. It follows from \cite[Eq. 6.6621.1]{GR}, \cite[Eq. 15.4.18]{HMF} that
\begin{equation}\label{integral J2it exp}
\int_0^{\infty}J_{2it}(x)\exp(ix\cosh\beta)dx=-\frac{\exp(-(\pi+2i\beta)t)}{i\sinh\beta}.
\footnote{Note that there is the typo in \cite[Eq. 7.7]{DesIw}: the minus sign is missed.}
\end{equation}
Differentiating equation \eqref{integral J2it exp} with respect to $\beta$, we have
\begin{equation}\label{integral J2it x exp}
\int_0^{\infty}J_{2it}(x)\exp(ix\cosh\beta)xdx=-\exp(-(\pi+2i\beta)t)\left(
\frac{2it}{\sinh^2\beta}+\frac{\cosh\beta}{\sinh^3\beta}
\right).
\end{equation}
Substituting \eqref{phi def} to \eqref{phiHat-Def} and using \eqref{integral J2it x exp}, we obtain \eqref{phihat expression}.

Let us now prove \eqref{phi0 expression}. Differentiating equation \eqref{integral J2it exp} with respect to $\beta$ and taking $t=0$ yields
\begin{equation}\label{integral J0 exp}
\int_0^{\infty}J_{0}(x)\exp(ix\cosh\beta)xdx=-\frac{\cosh\beta}{\sinh\beta}.
\footnote{Correcting the typo in \cite[Eq. 7.7]{DesIw},  the formula \eqref{integral J0 exp} has an additional minus sign compared to \cite[Eq. 7.8]{DesIw}.}
\end{equation}
 Differentiating equation \eqref{integral J2it exp} with respect to $\beta$ twice,  taking $t=0$ and using \eqref{integral J0 exp}, we show that
\begin{equation}\label{integral J0 x2 exp}
\int_0^{\infty}J_{0}(x)\exp(ix\cosh\beta)x^2dx=-i\left(
\frac{2}{\sinh^3\beta}+\frac{3}{\sinh^5\beta}
\right).
\end{equation}
Substituting \eqref{phi def} to \eqref{phi0-Def} and using \eqref{integral J0 x2 exp}, we prove \eqref{phi0 expression}.

Finally, the first equality in \eqref{phiBbound} can be proved similarly to \cite[Eq. 7.5]{DesIw}. The only difference is that we now use \eqref{integral J0 x2 exp} instead of \eqref{integral J0 exp}. The final estimate on $\varphi_B(x)$ in \eqref{phiBbound} can be proved in the same way as \cite[Lemma 11]{DesIw}.
\end{proof}
Now we are ready to prove \eqref{PrG1 =A+B+O}. As it was mentioned in the introduction, our arguments are similar to Iwaniec's proof of \eqref{PrG =A+B+O} in \cite{IwPG} (see also \cite[Section 6]{LuoSarnakPG} for more details). For completeness we sketch the proof below.

Consider the following  sum
\begin{equation}\label{spec.sum averaged over n}
\sum_n h(n)\sum_{j}\frac{|\rho_j(n)|^2}{\cosh(\pi t_j)}\hat{\varphi}(t_j),
\end{equation}
where $h(x)$ is as in Theorem \ref{mainthm} and $\rho_j(n)$ is the $n$-th Fourier coefficient of the Hecke-Maa{\ss} cusp forms. Applying the Mellin inversion formula to $h(n)$ and using the properties of the Rankin-Selberg $L$-function, we have
\begin{equation}\label{spec.sum decomposition0}
\sum_{j}\hat{\varphi}(t_j)=
\frac{\zeta(2)}{2N}\sum_n h(n)\sum_{j}\frac{|\rho_j(n)|^2}{\cosh(\pi t_j)}\hat{\varphi}(t_j)+\mathcal{B}_1,
\end{equation}
where $\mathcal{B}_1$ is defined by \eqref{B1 def}.  Applying the Kuznetsov trace formula \cite[p. 234]{LuoSarnakPG} to the sum over $j$ on the right-hand side of equation \eqref{spec.sum decomposition0} and estimating some of the arising terms using \eqref{phi0 expression} and \eqref{phiBbound} in the same way as in the paper of Luo-Sarnak  \cite[p. 234]{LuoSarnakPG}, we prove \eqref{PrG1 =A+B+O}.


\section{Sums of Kloosterman sums}
In order to analyse sums of Kloosterman sums in \eqref{A1 def} we apply the following result of Kuznetsov \cite{K}.
Let
\begin{equation}\label{Zpsi(s) def}
Z_{\psi}(s):=\sum_{q=1}^{\infty}\frac{1}{q}\sum_{n=1}^{\infty}\frac{S(n,n;q)}{n^s}
\psi\left(\frac{4\pi n}{q}\right).
\end{equation}

\begin{lem}\label{Kuznetsov lemma}
Assume that for $\Delta>3/4$ we have
\begin{equation*}\psi(x)\ll x^{2\Delta} \quad \text{as }x\to+0,
\end{equation*}
\begin{equation*}\psi(x)\ll 1 \quad \text{as }x\to+\infty.
\end{equation*} Then for $1+2\Delta>\Re{s}>3/2$
\begin{equation}\label{Zpsi(s) equation}
Z_{\psi}(s)=\frac{2\zeta(s)}{\zeta(2s)}\mathop{{\sum}^*}_{n=0}^{\infty}\mathscr{L}_{n^2-4}(s)\Psi(n,s),
\end{equation}
with
\begin{equation}\label{Psi(n,s) def}
\Psi(n,s)=(4\pi)^{s-1}\int_0^{\infty}\psi(x)\cos\left(\frac{nx}{2}\right)x^{-s}dx,
\end{equation}
provided that the function $\psi(x)$ is such that the series on the right-hand side of \eqref{Zpsi(s) equation} is absolutely convergent.
\end{lem}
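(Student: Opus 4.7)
The approach I would take follows Kuznetsov's method from \cite{K}. The first step is to apply Mellin inversion to $\psi$. Since the hypotheses force $\psi(x) \ll x^{2\Delta}$ near $0$ and $\psi(x)\ll 1$ near infinity, the Mellin transform $\tilde{\psi}(w)$ is holomorphic in the vertical strip $0 < \Re w < 2\Delta$ and decays suitably on vertical lines, giving
\begin{equation*}
\psi\!\left(\frac{4\pi n}{q}\right) = \frac{1}{2\pi i}\int_{(\sigma)}\tilde{\psi}(w)\left(\frac{4\pi n}{q}\right)^{-w}dw
\end{equation*}
for $0<\sigma<2\Delta$. Substituting into \eqref{Zpsi(s) def} and interchanging sum and integral (which is legal for $\Re s > 3/2$ by Weil's bound \eqref{Weilbound}), the problem is reduced to identifying the double Dirichlet series
\begin{equation*}
D(s,w) = \sum_{q=1}^\infty q^{w-1}\sum_{n=1}^\infty \frac{S(n,n;q)}{n^{s+w}}.
\end{equation*}

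The heart of the proof is an arithmetic reorganization of $D(s,w)$. I would open the Kloosterman sum and, for each modulus $q$, collect the residues $a$ modulo $q$ according to the trace $t \equiv a + a^{*}\pmod q$. Since such an $a$ satisfies the quadratic congruence $a^2 - ta + 1 \equiv 0\pmod q$, the count of pairs contributing to a fixed trace $t$ is exactly $\rho_q(t^2-4)$, which is the arithmetic source of the discriminant appearing in $\mathscr{L}_{n^2-4}(s)$. After this reorganisation the $n$-sum becomes a Hurwitz-type series $\sum_{n\geq 1} e(tn/q) n^{-s-w}$; applying its functional equation (equivalently, Poisson summation) converts the additive phase into the cosine kernel $\cos(tx/2)$ that appears in $\Psi(n,s)$. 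Symmetrizing over $\pm t$ and combining with the factor of $2$ accounts for the coefficient $2$ in front of the sum, while the residues $t \equiv 0$ and the boundary terms produce the starred term $\mathop{{\sum}^*}$.

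Once the $t$-collection is made, the dependence on $q$ factors and the $q$-series assembles into $\sum_q \rho_q(t^2-4) q^{-s}$, which by the definition \eqref{Lbyk} equals $\zeta(s)/\zeta(2s) \cdot \mathscr{L}_{t^2-4}(s)$. Putting everything together and recognising the inner Mellin integral $\int_0^\infty \psi(x)\cos(tx/2)x^{-s}dx$ as $(4\pi)^{1-s}\Psi(t,s)$, one recovers \eqref{Zpsi(s) equation}.

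The main obstacle is convergence bookkeeping. Absolute convergence of $Z_\psi(s)$ on the left requires $\Re s > 3/2$ (via Weil and the decay of $\psi$), whereas absolute convergence of the right-hand side requires $\Re s < 1+2\Delta$: the condition $\psi(x)\ll x^{2\Delta}$ near zero controls the decay of $\Psi(n,s)$ in the $n$-aspect, which combines with the subconvex bound \eqref{eq:subconvexity} for $\mathscr{L}_{n^2-4}$ to yield absolute convergence exactly in the claimed strip. Justifying the interchange of summation and integration, shifting the $w$-contour across any poles of $\tilde{\psi}$, and handling the $n=0$ and small-trace terms uniformly in this common strip is the technical part; once cleared, the identification via the Hurwitz functional equation and the $\rho_q$-counting of trace classes is essentially formal.
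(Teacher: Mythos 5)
Your proposal is correct in substance and uses the same two key ingredients as the paper's proof, but assembles them in the opposite order and inserts an extra Mellin layer. The paper works directly with $\psi$: it splits $n=l+mq$ to form the shifted sums $F_{\psi}(l/q,s)=\sum_{m\ge0}(m+l/q)^{-s}\psi(4\pi(m+l/q))$, symmetrizes using $S(-l,-l;q)=S(l,l;q)$, Fourier-expands the resulting periodic function of $l/q$ (this is exactly your ``Poisson summation / Hurwitz functional equation'' step, done elementarily, with the hypothesis $\psi(x)\ll x^{2\Delta}$ used to kill the boundary term at $x=0$), and only \emph{then} opens the Kloosterman sum to evaluate $\sum_{l=1}^{q}S(l,l;q)\cos(2\pi ln/q)=q\rho_q(n^2-4)$ for an honest integer $n$. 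You instead open $S(n,n;q)$ by trace first and Poisson-sum afterwards; this works, but it is precisely the step your sketch leaves loosest. After collecting by trace you have residues $t\bmod q$, which depend on $q$, so the claim that ``the dependence on $q$ factors and the $q$-series assembles into $\sum_q\rho_q(t^2-4)q^{-s}$'' does not yet parse: the Lerch functional equation applied to $\sum_{n}e(tn/q)n^{-s-w}$ produces a \emph{dual} integer variable $k$ running over $k\equiv t\pmod q$, the cosine kernel in $\Psi(\cdot,s)$ carries that dual variable (not the trace $t$ as you wrote), and one must invoke the periodicity of $k\mapsto\rho_q(k^2-4)$ modulo $q$ to merge the trace sum with the dual sum into a single $q$-independent sum over integers $k$, whose $\pm k$ symmetry then yields the factor $2$ and the starred $k=0$ term. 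The paper's ordering sidesteps this bookkeeping entirely, which is what it buys you; your ordering, once the dual variable is tracked correctly, gives the same identity, and the Mellin inversion of $\psi$ is an avoidable complication (the Fourier expansion can be done on $\psi$ itself). With that variable bookkeeping repaired, your argument is a valid reorganization of Kuznetsov's proof.
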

\begin{rem*}
Lemma \ref{Kuznetsov lemma} was proved by Kuznetsov in \cite{K}.  We provide a proof below because this reference is hard to find.
\end{rem*}
\begin{proof}
Using the conditions on $\psi(x)$ and Weil's bound  \eqref{Weilbound}, we find that  the series \eqref{Zpsi(s) def} is  absolutely convergent for $\Re{s}>3/2.$

On the one hand, we have
\begin{equation}\label{Zpsi(s) equation1}
Z_{\psi}(s)=\sum_{q=1}^{\infty}\frac{1}{q}\sum_{l=1}^{q}S(l,l;q)
\sum_{n=0}^{\infty}(l+nq)^{-s}
\psi\left(\frac{4\pi(l+nq)}{q}\right).
\end{equation}
On the other hand, since $S(-l,-l;q)=S(l,l;q)$, it follows that
\begin{equation}\label{Zpsi(s) equation2}
Z_{\psi}(s)=\sum_{q=1}^{\infty}\frac{1}{q}\sum_{l=0}^{q-1}S(l,l;q)
\sum_{n=0}^{\infty}(q-l+nq)^{-s}
\psi\left(\frac{4\pi(q-l+nq)}{q}\right).
\end{equation}
Let
\begin{equation}\label{tilde psi def}
F_{\psi}\left(x,s\right)=\sum_{n=0}^{\infty}(n+x)^{-s}
\psi\left(4\pi(n+x)\right).
\end{equation}
Then \eqref{Zpsi(s) equation1} and \eqref{Zpsi(s) equation2} can be rewritten as
\begin{equation}\label{Zpsi(s) equation1.1}
Z_{\psi}(s)=\sum_{q=1}^{\infty}\frac{1}{q^{1+s}}\sum_{l=1}^{q}S(l,l;q)F_{\psi}\left(\frac{l}{q},s\right),
\end{equation}
\begin{equation}\label{Zpsi(s) equation2.1}
Z_{\psi}(s)=\sum_{q=1}^{\infty}\frac{1}{q^{1+s}}\sum_{l=0}^{q-1}S(l,l;q)
F_{\psi}\left(1-\frac{l}{q},s\right).
\end{equation}
Let us assume that $3/2<\Re{s}<2\Delta$. Then $$\lim_{x\to0}\psi(x)x^{-s}=0,$$ and consequently
$
F_{\psi}\left(1,s\right)=F_{\psi}\left(0,s\right).
$
Therefore, \eqref{Zpsi(s) equation2.1} can be written as
\begin{equation}\label{Zpsi(s) equation2.2}
Z_{\psi}(s)=\sum_{q=1}^{\infty}\frac{1}{q^{1+s}}\sum_{l=1}^{q}S(l,l;q)
F_{\psi}\left(1-\frac{l}{q},s\right).
\end{equation}
Applying \eqref{Zpsi(s) equation1.1} and \eqref{Zpsi(s) equation2.2} we have
\begin{equation}\label{Zpsi(s) equation3}
Z_{\psi}(s)=\frac{1}{2}\sum_{q=1}^{\infty}\frac{1}{q^{1+s}}\sum_{l=1}^{q}S(l,l;q)\left(
F_{\psi}\left(\frac{l}{q},s\right)+F_{\psi}\left(1-\frac{l}{q},s\right)
\right).
\end{equation}
Expanding the function $F_{\psi}(x,s)+F_{\psi}(1-x,s)$ in the Fourier series gives
\begin{equation}\label{tilde psi Fourier}
\frac{F_{\psi}(x,s)+F_{\psi}(1-x,s)}{2}=2\mathop{{\sum}^*}_{n=0}^{\infty}
\Psi(n,s)\cos(2\pi nx).
\end{equation}
Substituting \eqref{tilde psi Fourier} in \eqref{Zpsi(s) equation3} yields
\begin{equation}\label{Zpsi(s) equation4}
Z_{\psi}(s)=2\sum_{q=1}^{\infty}\frac{1}{q^{1+s}}\mathop{{\sum}^*}_{n=0}^{\infty}
\Psi(n,s)\sum_{l=1}^{q}S(l,l;q)\cos\left(2\pi\frac{ln}{q}\right).
\end{equation}
The change of the orders of summation is justified due to the absolute convergence of the right-hand side of \eqref{Zpsi(s) equation4}. This follows from Weil's bound \eqref{Weilbound} and the conditions of the lemma, namely that $\Re{s}>3/2$ and that the function $\Psi(n,s)$ is of rapid decay.

The last step of the proof is to show that
\begin{equation}\label{sum Kloosterman sum to pho}
\sum_{l=1}^{q}S(l,l;q)\cos\left(2\pi\frac{ln}{q}\right)=q\rho_q(n^2-4).
\end{equation}
Since Kloosterman sums are always real we obtain
\begin{multline*}
\sum_{l=1}^{q}S(l,l;q)\cos\left(2\pi\frac{ln}{q}\right)=\Re{\left(
\sum_{\substack{d,e=1\\de\equiv1(q)}}^q
\sum_{l=1}^{q}e\left(\frac{l(d+e+n)}{q}\right)\right)}=\\
q\sum_{\substack{d=1\\d+d^{*}+n\equiv0(q)}}^q1=
q\rho_q(n^2-4),
\end{multline*}
where $dd^{*}\equiv1(q).$ For the proof of the last equality see \cite[Lemma 2.3]{SY}.
\end{proof}

\begin{lem}\label{Kuznetsov lemma application}
The following exact formula holds
\begin{multline}\label{sum Kloosterman decomposition}
\sum_{n}h(n)\sum_{q=1}^{\infty}\frac{S(n,n;q)}{q}
\varphi\left(\frac{4\pi n}{q}\right)=\frac{2\tilde{h}(1)}{\zeta(2)}
\mathop{{\sum}^*}_{\substack{n=0\\n\neq2}}^{\infty}\mathscr{L}_{n^2-4}(1)\Phi(n,1)+\\
2\res_{s=1}\left(\tilde{h}(s)\frac{\zeta(s)\zeta(2s-1)}{\zeta(2s)}\Phi(2,s)\right)+\\
\frac{1}{2\pi i}
\int_{(1/2)}\tilde{h}(s)\frac{2\zeta(s)}{\zeta(2s)}\mathop{{\sum}^*}_{n=0}^{\infty}\mathscr{L}_{n^2-4}(s)\Phi(n,s)ds,
\end{multline}
where
\begin{equation}\label{Phi(n,s)def}
\Phi(n,s)=\frac{\sinh^2\beta}{2\pi}\frac{(4\pi)^{s-1}\Gamma(3-s)}{(c^2+n^2/4)^{3/2-s/2}}\cos\left((3-s)\arctan\frac{n}{2c}\right),
\end{equation}
\end{lem}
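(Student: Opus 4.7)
The plan is to combine the Mellin inversion formula with Kuznetsov's identity (Lemma~\ref{Kuznetsov lemma}) and then shift a contour past the pole at $s=1$. Since $h$ is smooth with compact support in $[N,2N]$, its Mellin transform $\tilde{h}(s)$ is entire and of rapid decay in vertical strips. Hence, for any real $c_0$,
\begin{equation*}
h(n)=\frac{1}{2\pi i}\int_{(c_0)}\tilde{h}(s)n^{-s}ds.
\end{equation*}
Choosing $c_0>3/2$ and appealing to Weil's bound \eqref{Weilbound} to justify interchanging summation and integration gives
\begin{equation*}
\sum_{n}h(n)\sum_{q=1}^{\infty}\frac{S(n,n;q)}{q}\varphi\left(\frac{4\pi n}{q}\right)=\frac{1}{2\pi i}\int_{(c_0)}\tilde{h}(s)\,Z_{\varphi}(s)\,ds,
\end{equation*}
where $Z_{\varphi}(s)$ is as in \eqref{Zpsi(s) def}.

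Next, I verify that $\varphi$ of \eqref{phi def} satisfies the hypotheses of Lemma~\ref{Kuznetsov lemma}. Near $0$ we have $\varphi(x)\ll x^{2}$, so we may take $\Delta=1$; near infinity, \eqref{a,b def} yields $|\exp(ix\cosh\beta)|=\exp(-ax)$ with $a>0$, so $\varphi(x)\ll x^{2}\exp(-ax)\ll 1$. Lemma~\ref{Kuznetsov lemma} is therefore applicable for $3/2<\Re s<3$, and
\begin{equation*}
Z_{\varphi}(s)=\frac{2\zeta(s)}{\zeta(2s)}\mathop{{\sum}^{*}}_{n=0}^{\infty}\mathscr{L}_{n^{2}-4}(s)\Phi(n,s),
\end{equation*}
with $\Phi(n,s)$ given by \eqref{Psi(n,s) def} applied to $\varphi$. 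To put $\Phi(n,s)$ into the form \eqref{Phi(n,s)def}, I write $\cos(nx/2)=\tfrac12(e^{inx/2}+e^{-inx/2})$ and use $i\cosh\beta=-c$ with $\Re c=a>0$ (see \eqref{c def}, \eqref{a,b def}) to obtain, via $\int_{0}^{\infty}x^{2-s}e^{-(c\mp in/2)x}dx=\Gamma(3-s)(c\mp in/2)^{s-3}$,
\begin{equation*}
(c-in/2)^{s-3}+(c+in/2)^{s-3}=2(c^{2}+n^{2}/4)^{(s-3)/2}\cos\!\left((3-s)\arctan\tfrac{n}{2c}\right),
\end{equation*}
after fixing the branch using $c\pm in/2=\sqrt{c^{2}+n^{2}/4}\,e^{\pm i\arctan(n/(2c))}$; this reproduces \eqref{Phi(n,s)def}.

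Finally, I shift the line of integration from $\Re s=c_{0}$ to $\Re s=1/2$. Singularities of the integrand at $s=1$ come from two places. First, $\zeta(s)$ in the prefactor $2\zeta(s)/\zeta(2s)$ contributes a simple pole, which for every $n\neq 2$ (with the $n=0$ term weighted by $1/2$) yields the residue $(2/\zeta(2))\tilde{h}(1)\mathscr{L}_{n^{2}-4}(1)\Phi(n,1)$; summing produces the first term in \eqref{sum Kloosterman decomposition}. Second, the $n=2$ term involves $\mathscr{L}_{0}(s)=\zeta(2s-1)$, so this summand alone has a double pole at $s=1$, whose total residue is the second term of \eqref{sum Kloosterman decomposition}. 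The remaining integral on $\Re s=1/2$ is exactly the third term.

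The main technical obstacle is justifying the contour shift, i.e.\ proving absolute convergence of the resulting series on $\Re s=1/2$ uniformly in the imaginary part and controlling the integrand as $|\Im s|\to\infty$. For large $n$, the explicit form \eqref{Phi(n,s)def} together with Stirling gives $\Phi(n,1/2+it)\ll n^{-5/2}$ (polynomially in $|t|$), while \eqref{eq:subconvexity} supplies $\mathscr{L}_{n^{2}-4}(1/2+it)\ll n^{2\theta}(1+|t|)^{A}$ with $2\theta<2$. This makes the $n$-sum absolutely convergent, and the rapid decay of $\tilde{h}(s)$ and $\Gamma(3-s)$ in vertical strips makes the $s$-integral converge and legitimises the shift. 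Executing this check carefully, together with the residue computation above, yields the claimed exact formula.
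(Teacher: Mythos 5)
Your proof follows essentially the same route as the paper's: Mellin inversion, application of Lemma~\ref{Kuznetsov lemma}, explicit evaluation of $\Phi(n,s)$ (the paper cites Gradshteyn--Ryzhik 3.944.6 where you compute the two exponential integrals by hand, arriving at the same expression \eqref{Phi(n,s)def}), and a contour shift to $\Re s=1/2$ collecting the simple pole of $\zeta(s)$ for $n\neq 2$ and the double pole coming from $\zeta(s)\zeta(2s-1)$ for $n=2$. One small correction to your convergence remark: against $\Phi(n,1/2+it)\ll n^{-5/2}$ the $n$-sum requires $2\theta<3/2$ rather than $2\theta<2$ --- harmless here since $\theta\le 1/4$.
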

\begin{proof}
With the goal of using Lemma \ref{Kuznetsov lemma} in order to evaluate \eqref{A1 def}, we  apply the Mellin inversion formula to the function $h(n)$, getting
\begin{equation*}
\sum_{n}h(n)\sum_{q=1}^{\infty}\frac{S(n,n;q)}{q}
\varphi\left(\frac{4\pi n}{q}\right)=\frac{1}{2\pi i}\int_{(\sigma)}\tilde{h}(s)Z_{\varphi}(s)ds,
\end{equation*}
where $3>\sigma>3/2.$  The function $\varphi(x)$ defined by \eqref{phi def} satisfies the conditions of  Lemma \ref{Kuznetsov lemma} because it behaves like $x^2$ when $x\to 0$ and it decays exponentially when $x\to+\infty$. Then Lemma \ref{Kuznetsov lemma} yields
\begin{multline}\label{sum of Kloosterman2}
\sum_{n}h(n)\sum_{q=1}^{\infty}\frac{S(n,n;q)}{q}
\varphi\left(\frac{4\pi n}{q}\right)=\\
\frac{1}{2\pi i}\int_{(\sigma)}
\frac{2\zeta(s)\tilde{h}(s)}{\zeta(2s)}\mathop{{\sum}^*}_{n=0}^{\infty}\mathscr{L}_{n^2-4}(s)\Phi(n,s)ds,
\end{multline}
where
\begin{multline}\label{Phin integral}
\Phi(n,s)=(4\pi)^{s-1}\int_0^{\infty}\varphi(x)\cos\left(\frac{nx}{2}\right)x^{-s}dx=\\
\frac{\sinh^2\beta}{2\pi}(4\pi)^{s-1}\int_0^{\infty}\cos\left(\frac{nx}{2}\right)\exp(ix\cosh\beta)x^{2-s}dx.
\end{multline}
To evaluate \eqref{Phin integral} we apply \cite[Eq. 3.944.6]{GR} and obtain \eqref{Phi(n,s)def}.  As explained in Section \ref{section:notation}, the function $\mathscr{L}_{n^2-4}(s)$ has a pole at $s=1$ only if $n=2$. In the later case $\mathscr{L}_{0}(s)=\zeta(2s-1).$ Finally, moving the line of integration in \eqref{sum of Kloosterman2} to $\Re{s}=1/2$ we obtain \eqref{sum Kloosterman decomposition}.
\end{proof}


To derive an asymptotic formula from \eqref{sum Kloosterman decomposition} we need the following lemma.
\begin{lem}\label{estimate on exp factor}
Let $c=a-ib$ being defined in \eqref{c def} and
\begin{equation}\label{z_pm def}
z_{\pm}(n):=2ci\pm n=2b\pm n+2ai.
\end{equation}
Then for $n\ge0$ and any real $t$ the following inequality holds
\begin{equation}\label{exp factor2}
-\pi|t|-t\arg{(n^2/4+c^2)}\pm t(\arg{z_{+}(n)}-\arg{z_{-}(n)})\le0.
\end{equation}
\end{lem}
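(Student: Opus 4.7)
The plan is to reduce the target inequality to elementary bounds on three real arguments by using the factorization
\[
\frac{n^{2}}{4}+c^{2}=\left(\frac{n}{2}+ic\right)\left(\frac{n}{2}-ic\right)=-\frac{z_{+}(n)\,z_{-}(n)}{4},
\]
which follows immediately from $n/2+ic=z_{+}(n)/2$ and $n/2-ic=-z_{-}(n)/2$. Writing $\theta_{\pm}:=\arg z_{\pm}(n)$ and $\alpha:=\arg\bigl(n^{2}/4+c^{2}\bigr)$, the claim becomes
\[
-\pi|t|-t\alpha\pm t(\theta_{+}-\theta_{-})\le 0,
\]
and I would analyse the three arguments separately before combining them.

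First I would locate each complex number in the plane. Since $X,T\gg 1$, the quantities $a$ and $b$ in \eqref{a,b def} are both strictly positive, so $z_{+}(n)=(n+2b)+2ai$ lies in the open first quadrant, giving $\theta_{+}\in(0,\pi/2)$; the imaginary part of $z_{-}(n)=(2b-n)+2ai$ is also strictly positive, giving $\theta_{-}\in(0,\pi)$. On the other hand,
\[
\frac{n^{2}}{4}+c^{2}=\frac{n^{2}}{4}+a^{2}-b^{2}-2abi
\]
has strictly negative imaginary part, so $\alpha\in(-\pi,0)$. Next, the factorization yields $\arg\bigl(-z_{+}(n)z_{-}(n)/4\bigr)\equiv\theta_{+}+\theta_{-}+\pi\pmod{2\pi}$. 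Since $\theta_{+}+\theta_{-}\in(0,3\pi/2)$, the unique representative of this congruence class inside the principal interval $(-\pi,0)$ occupied by $\alpha$ is $\theta_{+}+\theta_{-}-\pi$, so
\[
\alpha=\theta_{+}+\theta_{-}-\pi.
\]

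Substituting this identity, the target inequality collapses to purely elementary bounds. For $t\ge 0$, after dividing by $t$ (trivial if $t=0$), it becomes $\pm(\theta_{+}-\theta_{-})\le\theta_{+}+\theta_{-}$, equivalent to $\theta_{+},\theta_{-}\ge 0$. For $t<0$, writing $|t|=-t$ and dividing by $-t>0$, it becomes $\mp(\theta_{+}-\theta_{-})\le 2\pi-\theta_{+}-\theta_{-}$, equivalent to $\theta_{+},\theta_{-}\le\pi$. Both sets of bounds were already established in the previous step. The only genuinely delicate point is the branch identification $\alpha=\theta_{+}+\theta_{-}-\pi$; I would sanity-check it at $n=0$, where $\alpha=2\arg c=-\pi+2\gamma$ while $\theta_{+}=\theta_{-}=\arctan(a/b)\approx\gamma$ by \eqref{argc}, confirming the $-\pi$ shift and ruling out sign errors.
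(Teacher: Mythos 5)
Your proof is correct, and it takes a genuinely different route from the paper's. The paper proves \eqref{exp factor2} by brute-force case analysis: it splits according to the sign of $t$ and the $\pm$, and in the harder cases further splits the range of $n$ at $2(b^2-a^2)^{1/2}$ and $2b$, verifying each resulting arctangent inequality by hand (e.g.\ by computing tangents of both sides). Your argument instead rests on the single algebraic identity $n^2/4+c^2=-z_{+}(n)z_{-}(n)/4$, which, after the branch bookkeeping, yields the exact relation $\arg(n^2/4+c^2)=\arg z_{+}(n)+\arg z_{-}(n)-\pi$; the lemma then collapses to the trivial bounds $0\le\arg z_{\pm}(n)\le\pi$, uniformly in $n$ and in the choice of sign. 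Your branch identification is airtight as written: since $\theta_{+}+\theta_{-}\in(0,3\pi/2)$, the representatives $\theta_{+}+\theta_{-}+\pi$ and $\theta_{+}+\theta_{-}-3\pi$ lie outside $(-\pi,0)$, so the known membership $\alpha\in(-\pi,0)$ forces $\alpha=\theta_{+}+\theta_{-}-\pi$ (and incidentally proves $\theta_{+}+\theta_{-}<\pi$ for free). What your approach buys is uniformity and transparency — one identity replaces roughly six sub-cases, and the inequality is revealed to be an equality-plus-positivity statement rather than a delicate numerical coincidence; the paper's method, by contrast, requires no observation about the factorization but pays for it in length and in the risk of sign errors across the many cases. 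Your $n=0$ sanity check against \eqref{argc} is a nice touch that confirms the branch shift.
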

\begin{proof}
There are several cases to consider due to the presence of $\pm$ and $t\lessgtr0.$  For $t<0$ it is required to prove that
\begin{equation}\label{exp factor t<0 1}
\arg{(n^2/4+c^2)}\pm (\arg{z_{+}(n)}-\arg{z_{-}(n)})\le\pi.
\end{equation}

Let us consider the $-$ case in \eqref{exp factor t<0 1}. Then we need to show that
\begin{equation}\label{exp factor t<0 2}
\arg{(n^2/4+c^2)}- \arg{z_{+}(n)}+\arg{z_{-}(n)}\le\pi
\end{equation}
This is satisfied because $$\arg{z_{-}(n)}<\pi,\quad \arg{z_{+}(n)}>0, \quad \arg{(n^2/4+c^2)}<0.$$

Let us consider the $+$ case in \eqref{exp factor t<0 1}. Then it is required to prove that
\begin{equation}\label{exp factor t<0 2}
\arg{(n^2/4+c^2)}+ \arg{z_{+}(n)}-\arg{z_{-}(n)}\le\pi.
\end{equation}
This inequality holds because $$\arg{z_{-}(n)}>0,\quad \arg{z_{+}(n)}\ll T^{-1}, \quad \arg{(n^2/4+c^2)}<0.$$  As a result,
\eqref{exp factor2} is proved for $t<0$.

Now we assume that  $t>0$.  Then we need to show that
\begin{equation}\label{exp factor t>0 1}
-\arg{(n^2/4+c^2)}\pm (\arg{z_{+}(n)}-\arg{z_{-}(n)})\le\pi.
\end{equation}
If we consider the $+$ case in \eqref{exp factor t>0 1}, then our goal is to prove that
\begin{equation}\label{exp factor t>0 2}
-\arg{(n^2/4+c^2)}+\arg{z_{+}(n)}-\arg{z_{-}(n)}\le\pi,
\end{equation}
or equivalently,
\begin{multline}\label{exp factor t>0 3}
\arg{(n^2/4+a^2-b^2+2abi)}+\arg{(2b+n+2ai)}\\-\arg{(2b-n+2ai)}\le\pi.
\end{multline}

If $n>2(b^2-a^2)^{1/2}$, then
$$\arg{(n^2/4+a^2-b^2+2abi)}<\pi/2,\quad \arg{z_{+}(n)}\ll T^{-1},\quad
\arg{z_{-}(n)}>0,$$ and consequently,  \eqref{exp factor t>0 3} is satisfied.

So we are left to analyse the case $n\le2(b^2-a^2)^{1/2}.$  In this case
\begin{equation}\label{arg(n^2+c^2/4)}
\arg{(n^2/4+a^2-b^2+2abi)}=\frac{\pi}{2}+\arctan\frac{b^2-a^2-n^2/4}{2ab}.
\end{equation}
Furthermore,
\begin{equation*}
\arg{(2b+n+2ai)}=\arctan\frac{2a}{2b+n},
\end{equation*}
\begin{equation*}
\arg{(2b-n+2ai)}=\arctan\frac{2a}{2b-n}.
\end{equation*}
Therefore, in order to prove \eqref{exp factor t>0 3} it is sufficient to show that
\begin{equation}\label{exp factor t>0 4}
\arctan\frac{b^2-a^2-n^2/4}{2ab}+\arctan\frac{2a}{2b+n}\le \pi/2+\arctan\frac{2a}{2b-n}.
\end{equation}
Let us prove that the left-hand side of \eqref{exp factor t>0 4} is bounded by $\pi/2.$ With this goal, we evaluate the tangent of the left-hand side of \eqref{exp factor t>0 4}, which turns out to be positive because
$$
\frac{b^2-a^2-n^2/4}{2ab}\times\frac{2a}{2b+n}<1.
$$
Thus \eqref{exp factor t>0 4} is proved.

The $+$ case in \eqref{exp factor t>0 1} requires proving that
\begin{equation}\label{exp factor3}
-\arg{(n^2/4+c^2)}-\arg{z_{+}(n)}+\arg{z_{-}(n)}\le\pi,
\end{equation}
or equivalently,
\begin{multline}\label{exp factor4}
\arg{(n^2/4+a^2-b^2+2abi)}-\arg{(2b+n+2ai)}\\+\arg{(2b-n+2ai)}\le\pi.
\end{multline}

First, assume that $n\le2(b^2-a^2)^{1/2}.$ Using \eqref{arg(n^2+c^2/4)} and
\begin{equation*}
\arg{(2b-n+2ai)}=\frac{\pi}{2}-\arctan\frac{2b-n}{2a},
\end{equation*}
the inequality \eqref{exp factor4} can be written as
\begin{equation}\label{exp factor5}
\arctan\frac{b^2-a^2-n^2/4}{2ab}\le \arctan\frac{2b-n}{2a}+\arctan\frac{2a}{2b+n}.
\end{equation}
This is equivalent to
\begin{equation}\label{exp factor6}
\frac{b^2-a^2-n^2/4}{2ab}\le\frac{(2b-n)/(2a)+2a/(2b+n)}{1-(2b-n)/(2b+n)}.
\end{equation}
Simplifying we obtain
\begin{equation}\label{exp factor7}
(b^2-a^2-n^2/4)(2n-4b)\le 8a^2b.
\end{equation}
Inequality \eqref{exp factor7} always holds since the left-hand side is negative. This yields \eqref{exp factor4}.

Second, assume that  $2(b^2-a^2)^{1/2}<n\le2b.$ Then \eqref{exp factor4} can be rewritten as
\begin{equation*}
\arctan\frac{2ab}{n^2/4-b^2+a^2}+\arctan\frac{2a}{2b-n}-\arctan\frac{2a}{2b+n}\le\pi.
\end{equation*}
This can be simplified to
\begin{equation}\label{exp factor8}
-\arctan\frac{n^2/4-b^2+a^2}{2ab}-\arctan\frac{2b-n}{2a}-\arctan\frac{2a}{2b+n}\le0.
\end{equation}
The inequality \eqref{exp factor8} is always satisfied  and so is the inequality  \eqref{exp factor4}.

Third, assume that $n>2b.$ In this case \eqref{exp factor4} can be formulated in the following form
\begin{equation}\label{exp factor9}
\arctan\frac{2ab}{n^2/4-b^2+a^2}\le\arctan\frac{2a}{n-2b}+\arctan\frac{2a}{2b+n},
\end{equation}
which is true because
\begin{equation}\label{exp factor10}
\arctan\frac{2ab}{n^2/4-b^2+a^2}+\arctan\frac{2a}{2b+n}=\arctan\frac{2a}{n-2b}.
\end{equation}
The last equation  can be proved by taking the tangent of both sides. Therefore, inequality \eqref{exp factor4} is satisfied.

\end{proof}

\begin{lem}\label{lemma estimates on the first error}
For $N,X,T\gg1$ the following asymptotic formula holds
\begin{multline}\label{sum Kloosterman decomposition2}
\sum_{n}h(n)\sum_{q=1}^{\infty}\frac{S(n,n;q)}{q}
\varphi\left(\frac{4\pi n}{q}\right)=\frac{2\tilde{h}(1)}{\zeta(2)}
\sum_{n=3}^{\infty}\mathscr{L}_{n^2-4}(1)\Phi(n,1)\\+
O\left(N\log(NX)\right)+O\left(N^{1/2}X^{1/4+\theta}T^{3/2}\left(1+\frac{T}{X^{1/2}}\right)\right),
\end{multline}
where for $n>0$
\begin{equation}\label{Phi(n,1)def}
\Phi(n,1)=\frac{\sinh^2\beta}{2\pi c^2}
\frac{1-n^2/(4c^2)}{(1+n^2/(4c^2))^{2}}.
\end{equation}
\end{lem}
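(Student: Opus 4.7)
The starting point is the exact formula of Lemma~\ref{Kuznetsov lemma application}: the left-hand side equals a main sum $(2\tilde h(1)/\zeta(2))\sum^{*}_{n\neq 2}\mathscr{L}_{n^2-4}(1)\Phi(n,1)$, a residue at $s=1$, and a contour integral on $\Re s=1/2$. The goal is to trim the main sum to $n\ge 3$, to show that the removed pieces plus the residue give $O(N\log(NX))$, and to bound the contour integral by $O(N^{1/2}X^{1/4+\theta}T^{3/2}(1+T/X^{1/2}))$.

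\textbf{Trimming and residue.} The $n=0$ and $n=1$ contributions are each $O(N)$: $\mathscr{L}_{-4}(1)$ and $\mathscr{L}_{-3}(1)$ are absolute constants, $|\sinh^{2}\beta|\asymp X\asymp|c|^{2}$ gives $|\Phi(0,1)|,|\Phi(1,1)|\ll 1$ via (\ref{Phi(n,1)def}), and $\tilde h(1)=N$. The residue at $s=1$ arises from the double pole of $\zeta(s)\zeta(2s-1)$, so it equals a finite linear combination of $\tilde h(1),\tilde h'(1),\Phi(2,1),\partial_{s}\Phi(2,1)$ with arithmetic coefficients. The bounds $|\tilde h(1)|=N$, $|\tilde h'(1)|\ll N\log N$, $|\Phi(2,1)|\ll 1$, and $|\partial_{s}\Phi(2,1)|\ll\log X$ (from (\ref{Phi(n,s)def}) and $\log|c|\asymp\log X$) combine to give the $O(N\log(NX))$ bound.

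\textbf{Contour integral.} Using $c^{2}+n^{2}/4=-z_{+}(n)z_{-}(n)/4$ and $\cos((3-s)\arctan(n/(2c)))=\tfrac12\bigl((z_{-}/z_{+})^{(3-s)/2}+(z_{+}/z_{-})^{(3-s)/2}\bigr)$ rewrites $\Phi(n,s)$ as a sum of two terms proportional to $\sinh^{2}\beta\cdot(4\pi)^{s-1}\Gamma(3-s)z_{\pm}(n)^{s-3}$. On $\Re s=1/2$, Stirling gives $|\Gamma(3-s)|\asymp e^{-\pi|t|/2}(1+|t|)^{2}$ and $|z_{\pm}(n)^{s-3}|=|z_{\pm}(n)|^{-5/2}$ multiplied by a phase involving $\arg z_{\pm}(n)$ and $\arg(n^{2}/4+c^{2})$; Lemma~\ref{estimate on exp factor} is precisely the statement that these exponentials combine to $O(1)$. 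Together with $|\sinh^{2}\beta|\asymp X$ this yields $|\Phi(n,1/2+it)|\ll X(1+|t|)^{2}\max_{\pm}|z_{\pm}(n)|^{-5/2}$. Inserting the subconvexity (\ref{eq:subconvexity}) as $|\mathscr{L}_{n^{2}-4}(1/2+it)|\ll n^{2\theta}(1+|t|)^{A}$, the bound $|\zeta(1/2+it)/\zeta(1+2it)|\ll\log(1+|t|)$, and the rapid decay $|\tilde h(1/2+it)|\ll N^{1/2}(1+|t|)^{-B}$ for any $B$ (by smoothness of $h$), the contour integral is bounded by $\ll N^{1/2}X\log(XT)\sum_{n\ge 0}n^{2\theta}\max_{\pm}|z_{\pm}(n)|^{-5/2}$.

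\textbf{Peak region and main obstacle.} The $n$-sum concentrates near $2b\asymp X^{1/2}$, where $|z_{-}(n)|=\sqrt{(2b-n)^{2}+4a^{2}}$ is minimised. With $a\asymp X^{1/2}/T$, integral comparison in the case $a\gg 1$ gives $\sum_{n}|z_{-}(n)|^{-5/2}\asymp a^{-3/2}$, while for $a\ll 1$ the sum is dominated by the nearest-integer term and is $\ll a^{-5/2}$; uniformly, $\sum_{n}|z_{-}(n)|^{-5/2}\ll a^{-3/2}+a^{-5/2}\asymp(T/X^{1/2})^{3/2}(1+T/X^{1/2})$. Using $n^{2\theta}\asymp X^{\theta}$ in this window, the contour integral is bounded by $N^{1/2}\cdot X^{1+\theta}(T/X^{1/2})^{3/2}(1+T/X^{1/2})=N^{1/2}X^{1/4+\theta}T^{3/2}(1+T/X^{1/2})$, matching the target error; the ranges $|n-2b|\gg X^{1/2}/T$ and $n\gg X^{1/2}$ contribute less by absolute convergence. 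The principal technical obstacle is the uniform control of $\Phi(n,1/2+it)$ across the transition between bulk and peak, and confirming that the exponentials from Stirling and the complex powers are fully neutralised by Lemma~\ref{estimate on exp factor} in every regime, so that no residual factor of $e^{-\pi|t|/2}$ is lost in the estimate.
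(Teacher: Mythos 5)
Your proposal follows essentially the same route as the paper's proof: the same decomposition from Lemma \ref{Kuznetsov lemma application}, the same $O(N\log(NX))$ treatment of the $n=0,1$ terms and of the residue at $s=1$, and the same bound on the $\Re s=1/2$ integral via the subconvexity bound \eqref{eq:subconvexity}, Lemma \ref{estimate on exp factor}, and the concentration of $\sum_n n^{2\theta}|\Phi(n,1/2+it)|$ near $n\asymp 2|c|$ --- which the paper packages as $f_1(x)\ll\min\left(T^{5/2},x^{2\theta}|1-x|^{-5/2}\right)$ and you package as $\sum_n |z_-(n)|^{-5/2}\ll a^{-3/2}+a^{-5/2}$ with $a\asymp X^{1/2}/T$, an identical computation. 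The only slip, the claim $|\zeta(1/2+it)/\zeta(1+2it)|\ll\log(1+|t|)$, is false as stated but harmless, since any polynomial growth in $t$ is absorbed by the rapid decay of $\tilde h(1/2+it)$, exactly as in the paper.
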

\begin{proof}
According to \eqref{beta def}, \eqref{c def}  we have $|\sinh^2\beta|,|c|^2\sim X$. Then it follows from \eqref{Phi(n,s)def} that for $n=0,1,2$
$$|\Phi(n,1)|\ll 1,\quad
|\Phi'(2,1)|\ll \log X,
$$
where the derivative is taken with respect to $s.$ Since $\tilde{h}(1)\ll N$ and $\tilde{h}'(1)\ll N\log N$ we obtain
\begin{multline*}
\tilde{h}(1)\left(\mathscr{L}_{-4}(1)\Phi(0,1)+\mathscr{L}_{-3}(1)\Phi(1,1)\right)+\\
2\res_{s=1}\left(\tilde{h}(s)\frac{\zeta(s)\zeta(2s-1)}{\zeta(2s)}\Phi(2,s)\right)=O\left(N\log(NX)\right).
\end{multline*}
Thus, it remains to estimate the integral on the right-hand side of \eqref{sum Kloosterman decomposition}. Note that $$\tilde{h}(1/2+it)\ll N^{1/2}(1+|t|)^{-A}\text{ for any }A>0.$$ Applying \eqref{eq:subconvexity}, it is sufficient to prove  the following estimate
\begin{equation}\label{sum of Phi(n,1/2+it)0}
\sum_{n=0}^{\infty}(1+n)^{2\theta}|\Phi(n,1/2+it)|\ll(1+|t|)^{B}
X^{1/4+\theta}T^{3/2}\left(1+\frac{T}{X^{1/2}}\right).
\end{equation}
As a consequence of \eqref{Phi(n,s)def}, \eqref{argc} and the Stirling formula we obtain
\begin{multline}\label{sum of Phi(n,1/2+it)}
|\Phi(0,1/2+it)|\ll(1+|t|)^{B}\frac{X}{|c|^{5/2}}\exp(-\pi|t|/2-t\arg c)\\ \ll
(1+|t|)^{B}X^{-1/4},
\end{multline}
since $|c|^2\sim X.$ Hence it remains to prove that
\begin{equation}\label{sum of Phi(n,1/2+it)}
\sum_{n=1}^{\infty}n^{2\theta}|\Phi(n,1/2+it)|\ll(1+|t|)^{B}
X^{1/4+\theta}T^{3/2}\left(1+\frac{T}{X^{1/2}}\right)
\end{equation}
for some constant $B>0$. Using \eqref{Phi(n,s)def} we obtain
\begin{multline}\label{Phi(n,1/2+it)estimate1}
|\Phi(n,1/2+it)|\ll\frac{(1+|t|)^2X\exp(-\pi|t|/2)}{|n^2/4+c^2|^{5/4}}\times\\
\left|(n^2/4+c^2)^{it/2}\cos\left((5/2-it)\arctan\frac{n}{2c}\right)\right|.
\end{multline}
It follows from  \cite[Eq. 4.23.26]{HMF} that
\begin{equation*}
\arctan\frac{n}{2c}=\frac{i}{2}\log\frac{2ci+n}{2ci-n}=\frac{i}{2}\left(\log z_{+}(n)-\log z_{-}(n)\right),
\end{equation*}
where for $c=a-ib$ we have
\begin{equation*}
z_{\pm}(n)=2ci\pm n=2b\pm n+2ai.
\end{equation*}
Therefore,
\begin{multline}\label{cos estimate}
\left|\cos\left((5/2-it)\arctan\frac{n}{2c}\right)\right|\ll\\
\sum_{j=\pm1}\left|\frac{z_{+}(n)}{z_{-}(n)}\right|^{5j/4}
\exp\left(jt(\arg{z_{+}(n)}-\arg{z_{-}(n)})/2\right).
\end{multline}
Substituting \eqref{cos estimate} to \eqref{Phi(n,1/2+it)estimate1} gives
\begin{multline}\label{Phi(n,1/2+it)estimate2}
|\Phi(n,1/2+it)|\ll\frac{(1+|t|)^2X}{|n^2/4+c^2|^{5/4}}
\sum_{j=\pm1}\left|\frac{z_{+}(n)}{z_{-}(n)}\right|^{5j/4}\times\\
\exp\left(-\pi|t|/2-t\arg{(n^2/4+c^2)}/2+jt(\arg{z_{+}(n)}-\arg{z_{-}(n)})/2\right).
\end{multline}
By Lemma \ref{estimate on exp factor} for $j=\pm1$ we obtain
\begin{equation}\label{exp factor}
-\pi|t|/2-t\arg{(n^2/4+c^2)}/2+jt(\arg{z_{+}(n)}-\arg{z_{-}(n)})/2\le0.
\end{equation}
According to  \eqref{argc} we have
\begin{multline}\label{z+abs value}
|z_{+}(n)|^2=(2|c|\cos\gamma+n)^2+(2|c|\sin\gamma)^2=\\(2|c|)^2\left(\left(1-\frac{n}{2|c|}\right)^2+\frac{4n}{2|c|}\cos^2\frac{\gamma}{2}\right),
\end{multline}
\begin{equation}\label{z-abs value}
|z_{-}(n)|^2=(2|c|)^2\left(\left(1-\frac{n}{2|c|}\right)^2+\frac{4n}{2|c|}\sin^2\frac{\gamma}{2}\right),
\end{equation}
\begin{multline}\label{n^2/4+c^2-abs value}
|n^2/4+c^2|^2=(n^2/4-|c|^2\cos2\gamma)^2+(|c|^2\sin2\gamma)^2=\\
|c|^4\left(\left(1-\left(\frac{n}{2|c|}\right)^2\right)^2+4\left(\frac{n}{2|c|}\right)^2\sin^2\gamma\right).
\end{multline}
It follows from \eqref{z+abs value}, \eqref{z-abs value}, \eqref{n^2/4+c^2-abs value} and the fact that $|c|^2\sim X$ that
\begin{equation}\label{Phi(n,1/2+it)estimate3}
n^{2\theta}|\Phi(n,1/2+it)|\ll(1+|t|)^2X^{-1/4+\theta}\sum_{j=\pm1}f_j\left(\frac{n}{2|c|}\right),
\end{equation}
where for $x>0$
\begin{equation}\label{fdef}
f_j(x)=\left(\frac{(1-x)^2+4x\cos^2\gamma/2}{(1-x)^2+4x\sin^2\gamma/2}\right)^{5j/8}
\frac{x^{2\theta}}{\left((1-x^2)^2+4x^2\sin^2\gamma\right)^{5/8}}.
\end{equation}
Note that since $0<\gamma\ll T^{-1}$ (see \eqref{argc}), we have $f_{-1}(x)<f_1(x).$ Note that the function $f_1(x)$ is continuous and has finitely many monotonic segments. As a result,
\begin{multline}\label{Phi(n,1/2+it)estimate4}
\sum_{n=1}^{\infty}n^{2\theta}|\Phi(n,1/2+it)|\ll(1+|t|)^2X^{-1/4+\theta}\times\\
\left(X^{1/2}\int_0^{\infty}f_1(x)dx+\max_{x>0}|f_1(x)|\right).
\end{multline}
Since $T^{-1}\ll\gamma\ll T^{-1},$ for $x>0$ we have
\begin{equation}\label{f1 estimate1}
f_1(x)\ll\min\left(T^{5/2},\frac{x^{2\theta}}{|1-x|^{5/2}}\right).
\end{equation}
Finally, using \eqref{f1 estimate1} to estimate the right-hand side of \eqref{Phi(n,1/2+it)estimate4} we obtain
\begin{equation}\label{Phi(n,1/2+it)estimate5}
\sum_{n=1}^{\infty}n^{2\theta}|\Phi(n,1/2+it)|\ll
(1+|t|)^{2}
X^{1/4+\theta}T^{3/2}\left(1+\frac{T}{X^{1/2}}\right).
\end{equation}
\end{proof}


\begin{lem}\label{lemma estimate on the main error}
For $N,X,T\gg1$  the following estimate holds
\begin{equation}\label{estimate on sum L(1)}
\sum_{n=3}^{\infty}\mathscr{L}_{n^2-4}(1)\Phi(n,1)\ll
\max(X^{1/4+\theta/2}T^{3/2},X^{\theta/2}T^2)\log^2X.
\end{equation}
\end{lem}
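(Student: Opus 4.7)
The strategy is to apply the approximate functional equation \eqref{approximate func.eq.} with an auxiliary parameter $V > 0$, writing $\mathscr{L}_{n^2-4}(1) = S_V(n^2-4) + E_V(n)$. Shifting the contour in \eqref{approximate func.eq.} to $\Re s = -1/2$ and using subconvexity \eqref{eq:subconvexity} together with the exponential decay of $\Gamma(-1/2+it)$ (which absorbs the polynomial factor $(1+|t|)^A$) gives $|E_V(n)| \ll n^{2\theta} V^{-1/2}$. Thus the contribution of $E_V(n)$ is controlled by $V^{-1/2}\sum_{n\ge 3} n^{2\theta}|\Phi(n,1)|$, and arguing exactly as in the proof of Lemma \ref{lemma estimates on the first error} (substituting $u = n/(2|c|)$ and analysing the resonance at $u \sim 1$, where $|\Phi(x,1)|$ peaks to height $\sim T^2$ over a window of size $\gamma \sim 1/T$) yields
\begin{equation*}
\sum_{n\ge 3} n^{2\theta}|\Phi(n,1)| \ll X^\theta(X^{1/2}T + T^2).
\end{equation*}

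For the main part $\sum_{n\ge 3}S_V(n^2-4)\Phi(n,1)$, I would swap the order of summation to
\begin{equation*}
\sum_{q=1}^\infty \frac{\exp(-q/V)}{q}\sum_{n\ge 3}\lambda_q(n^2-4)\Phi(n,1)
\end{equation*}
and apply Abel summation to the inner sum. Writing $q = a^2 b$ with $b$ squarefree and using \eqref{sum of lambda} in the form $A_q(x) := \sum_{2<n\le x}\lambda_q(n^2-4) = x\mu(b)/b + E_q(x)$, the inner sum reduces to
\begin{equation*}
-\frac{\mu(b)}{b}\int_3^\infty x\Phi'(x,1)\,dx - \int_3^\infty E_q(x)\Phi'(x,1)\,dx,
\end{equation*}
the boundary term $A_q(N)\Phi(N,1)$ vanishing as $N\to\infty$ since $\Phi(N,1)\ll X/N^2$.

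The crucial observation making the argument succeed is the exact cancellation $\int_0^\infty \Phi(x,1)\,dx = 0$, immediate from the identity $\frac{d}{dx}\bigl(x/(c^2+x^2/4)\bigr) = (c^2-x^2/4)/(c^2+x^2/4)^2$ and the vanishing at the endpoints. Integration by parts then reduces $\int_3^\infty x\Phi'(x,1)\,dx$ to $O(1)$. For the $E_q$ contribution, partial summation in $q$ using the on-average bound \eqref{sum of lambda on average} gives $\sum_q q^{-1}\exp(-q/V) E_q(x) \ll V^{1/2}\log^2 V$, which combined with the total variation estimate $\int_3^\infty |\Phi'(x,1)|\,dx \ll T^2$ (obtained again via the $u = n/(2|c|)$ substitution, dominated by the resonance) yields
\begin{equation*}
\sum_{n\ge 3} S_V(n^2-4)\Phi(n,1) \ll \log V + T^2 V^{1/2}\log^2 V.
\end{equation*}

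Combining the two estimates gives a total bound $\ll T^2 V^{1/2}\log^2 V + V^{-1/2}X^\theta(X^{1/2}T + T^2)$, and the optimal balance $V = X^\theta\max(1, X^{1/2}/T)$ produces exactly the stated estimate $\max(X^{1/4+\theta/2}T^{3/2}, X^{\theta/2}T^2)\log^2 X$. The main difficulty I anticipate is recognising and exploiting the cancellation $\int_0^\infty \Phi(x,1)\,dx = 0$: without it, the $\mu(b)/b$ main term would carry mass of order $X^{1/2}T$ per $q$, overwhelming the target bound by at least a factor of $X^{1/4}$. Fortunately, this cancellation is a direct consequence of the double zero of $\varphi(x)$ at $x = 0$ (see \eqref{phi def}), manifesting as an exact antiderivative of the integrand in \eqref{Phi(n,1)def}.
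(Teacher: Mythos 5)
Your proposal is correct and follows essentially the same route as the paper: approximate functional equation with parameter $V$, subconvexity for the shifted contour, Abel summation in $n$ against $\lambda_q(n^2-4)$ with the on-average bound \eqref{sum of lambda on average}, the crucial cancellation $\int_0^\infty\Phi(x,1)\,dx=0$, the total-variation bound $\int|\Phi'(x,1)|\,dx\ll T^2$, and the same optimisation $V=X^{\theta}(1+X^{1/2}/T)$. The only (pleasant) difference is that you obtain $\int_0^\infty\Phi(x,1)\,dx=0$ from the explicit antiderivative $x/(c^2+x^2/4)$, whereas the paper deduces it by applying the Beta-function integral \eqref{Phi(x,1) integral2} twice.
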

\begin{proof}
Applying \eqref{approximate func.eq.} we obtain
\begin{multline}\label{application of approx fun.eq}
\sum_{n=3}^{\infty}\mathscr{L}_{n^2-4}(1)\Phi(n,1)=
\sum_{n=3}^{\infty}\Phi(n,1)S_V(n^2-4)-\\
\frac{1}{2\pi i}\int_{(-1/2)}\sum_{n=3}^{\infty}\Phi(n,1)\mathscr{L}_{n^2-4}(1+s)V^s\Gamma(s)ds.
\end{multline}
Let us first estimate the integral. Using \eqref{eq:subconvexity} and \eqref{Phi(n,1)def} we have
\begin{multline}\label{estimate on the error in AFE}
\frac{1}{2\pi i}\int_{(-1/2)}\sum_{n=3}^{\infty}\Phi(n,1)\mathscr{L}_{n^2-4}(1+s)V^s\Gamma(s)ds\ll\\
V^{-1/2}\sum_{n=3}^{\infty}|\Phi(n,1)|n^{2\theta}\ll
V^{-1/2}\sum_{n=3}^{\infty}\left|
\frac{1-n^2/(4c^2)}{(1+n^2/(4c^2))^{2}}\right|n^{2\theta}.
\end{multline}
It follows from \eqref{argc} that
\begin{multline}\label{Phi(x,1) abs.value1}
\left|1-\frac{n^2}{4c^2}\right|^2=1+\left(\frac{n^2}{4|c|^2}\right)^2+2\frac{n^2}{4|c|^2}\cos(2\gamma)\\=
\left(1-\frac{n^2}{4|c|^2}\right)^2+4\frac{n^2}{4|c|^2}\cos^2\gamma,
\end{multline}
\begin{equation}\label{Phi(x,1) abs.value2}
\left|1+\frac{n^2}{4c^2}\right|^2=
\left(1-\frac{n^2}{4|c|^2}\right)^2+4\frac{n^2}{4|c|^2}\sin^2\gamma.
\end{equation}

Therefore,
\begin{equation}\label{estimate on sum over n 1}
\sum_{n=3}^{\infty}\left|
\frac{1-n^2/(4c^2)}{(1+n^2/(4c^2))^{2}}\right|n^{2\theta}\ll
X^{\theta}\sum_{n=3}^{\infty}g\left(\frac{n}{2|c|}\right),
\end{equation}
where
\begin{equation}\label{g def}
g(y)=
\frac{((1-y^2)^2+4y^2\cos^2\gamma)^{1/2}}{(1-y^2)^2+4y^2\sin^2\gamma}y^{2\theta}.
\end{equation}
 Since the function $g(x)$ is continuous and has finitely many monotonic segments, we obtain
\begin{equation}\label{estimate on sum over n 2}
\sum_{n=3}^{\infty}\left|
\frac{1-n^2/(4c^2)}{(1+n^2/(4c^2))^{2}}\right|n^{2\theta}\ll
X^{\theta}
\left(X^{1/2}\int_0^{\infty}g(y)dy+\max_{y>0}|g(y)|\right).
\end{equation}
Note that we used the fact that $|c|\sim X^{1/2}.$ Since $T^{-1}\ll\gamma\ll T^{-1},$ for $y>0$ the following  holds
\begin{equation}\label{g estimate1}
g(y)\ll\min\left(T^{2},\frac{y^{2\theta}}{|1-y|^{2}}\right).
\end{equation}
Using \eqref{g estimate1} to estimate the right-hand side of \eqref{estimate on sum over n 2}, we obtain
\begin{equation}\label{estimate on sum over n 3}
\sum_{n=3}^{\infty}\left|
\frac{1-n^2/(4c^2)}{(1+n^2/(4c^2))^{2}}\right|n^{2\theta}\ll
X^{\theta}T^2+X^{1/2+\theta}T.
\end{equation}
Applying \eqref{estimate on sum over n 1}, \eqref{estimate on the error in AFE} and \eqref{application of approx fun.eq} we prove
\begin{multline}\label{application of approx fun.eq2}
\sum_{n=3}^{\infty}\mathscr{L}_{n^2-4}(1)\Phi(n,1)=
\sum_{n=3}^{\infty}\Phi(n,1)S_V(n^2-4)+\\
O\left(V^{-1/2}\left(X^{\theta}T^2+X^{1/2+\theta}T\right)\right).
\end{multline}
To estimate the sum in \eqref{application of approx fun.eq2} we apply \eqref{SV def} and   obtain
\begin{equation}\label{application of approx fun.eq2.1}
\sum_{n=3}^{\infty}\Phi(n,1)S_V(n^2-4)=
\sum_{q=1}^{\infty}\frac{\exp(-q/V)}{q}
\sum_{n=0}^{\infty}\Phi(n,1)\lambda_q(n^2-4)a(n),
\end{equation}
where $a(n)=0$ for $n=0,1,2$ and $a(n)=1$ for $n>2.$ Abel's summation formula yields
\begin{equation}\label{application of approx fun.eq2.2}
\sum_{n=0}^{\infty}\Phi(n,1)\lambda_q(n^2-4)a(n)=
-\int_0^{\infty}\Phi'(x,1)\sum_{0\le n\le x}\lambda_q(n^2-4)a(n)dx.
\end{equation}
We remark that for a real positive $x$ the function $\Phi(x,1)$ is still defined by \eqref{Phi(n,1)def}.
Let $q=a^2b$. Then
\begin{multline}\label{application of approx fun.eq2.3}
\sum_{n=0}^{\infty}\Phi(n,1)\lambda_q(n^2-4)a(n)=
-\int_0^{\infty}\Phi'(x,1)x\frac{\mu(b)}{b}dx-\\
\int_0^{\infty}\Phi'(x,1)\left(\sum_{0\le n\le x}\lambda_q(n^2-4)a(n)-x\frac{\mu(b)}{b}\right)dx=\\
\frac{\mu(b)}{b}\int_0^{\infty}\Phi(x,1)dx-
\int_0^{\infty}\Phi'(x,1)\left(\sum_{0\le n\le x}\lambda_q(n^2-4)a(n)-x\frac{\mu(b)}{b}\right)dx.
\end{multline}

Applying  Abel's summation formula, \eqref{sum of lambda on average} and
\cite[Eq. 4.358.2]{GR}, we have
\begin{equation}\label{application of approx fun.eq2.4}
\sum_{\substack{q=1\\q=a^2b}}^{\infty}\frac{\exp(-q/V)}{q}\left(\sum_{n\le z}\lambda_q(n^2-4)a(n)-z\frac{\mu(b)}{b}\right)\ll V^{1/2}\log^2V.
\end{equation}
Substituting \eqref{application of approx fun.eq2.3} to \eqref{application of approx fun.eq2.1} and using \eqref{application of approx fun.eq2.4}, we obtain
\begin{multline}\label{application of approx fun.eq3}
\sum_{n=3}^{\infty}\Phi(n,1)S_V(n^2-4)=\sum_{\substack{q=1\\q=a^2b}}^{\infty}\frac{\exp(-q/V)\mu(b)}{qb}\int_0^{\infty}\Phi(x,1)dx+\\
O\left(V^{1/2}\log^2V\int_0^{\infty}|\Phi'(x,1)|dx\right).
\end{multline}
It follows from \eqref{Phi(n,1)def} that
\begin{equation}\label{Phi(x,1)def}
\Phi(x,1)=\frac{\sinh^2\beta}{2\pi c^2}
\frac{1-x^2/(4c^2)}{(1+x^2/(4c^2))^{2}}.
\end{equation}
Hence using the fact that $|\sinh^2\beta|\sim|c|^2\sim X$, we show the following inequality
\begin{multline}\label{Phi(x,1) derivative abs.value}
|\Phi'(x,1)|\ll\frac{x}{|c|^2}\frac{|3/2-(x/(2c))^2|}{|1+(x/(2c))^2|^3}=\\
\frac{x}{|c|^2}\left(\left(\frac{3}{2}-\left(\frac{x}{2|c|}\right)^2\right)^2+6\left(\frac{x}{2|c|}\right)^2\cos^2\gamma\right)^{1/2}\times\\
\left(\left(1-\left(\frac{x}{2|c|}\right)^2\right)^2+4\left(\frac{x}{2|c|}\right)^2\sin^2\gamma\right)^{-3/2}.
\end{multline}
Integrating \eqref{Phi(x,1) derivative abs.value} with respect to $x$ gives
\begin{multline}\label{|Phi'(x,1)| integral}
\int_0^{\infty}|\Phi'(x,1)|dx\ll
\int_0^{\infty}
\frac{((3/2-x^2)^2+6x^2\cos^2\gamma )^{1/2}}{((1-x^2)^2+4x^2\sin^2\gamma )^{3/2}}xdx\ll\\
\ll
\int_0^{\infty}\min\left(T^3,\frac{1}{|1-x|^3}\right)dx\ll T^2.
\end{multline}

Now let us consider the first summand on the right-hand side of \eqref{application of approx fun.eq3}.
It follows from \eqref{Phi(x,1)def} that
\begin{equation}\label{Phi(x,1) integral}
\int_0^{\infty}\Phi(x,1)dx=
\frac{\sinh^2\beta}{2\pi c^2}\int_0^{\infty}
\frac{1-x^2/(4c^2)}{(1+x^2/(4c^2))^{2}}dx.
\end{equation}
In order to evaluate the integral above we use  \cite[page 311, Eq. (30)]{BE}, namely
\begin{equation}\label{Phi(x,1) integral2}
\int_0^{\infty}\frac{x^{s-1}}{(1+\alpha x^h)^{\nu}}dx=
h^{-1}\alpha^{-s/h}B(s/h,\nu-s/h),
\end{equation}
where $B(x,y)$ is the beta function, $|\arg\alpha|<\pi$, $h>0$, $0<\Re{s}<h\Re{\nu}.$
Applying \eqref{Phi(x,1) integral2} twice we show that
\begin{equation}\label{Phi(x,1) integral3}
\int_0^{\infty}\Phi(x,1)dx=0.
\end{equation}
Substituting \eqref{|Phi'(x,1)| integral} and  \eqref{Phi(x,1) integral3} in \eqref{application of approx fun.eq3} gives
\begin{equation}\label{application of approx fun.eq5}
\sum_{n=3}^{\infty}\Phi(n,1)S_V(n^2-4)\ll
V^{1/2}T^2\log^2V.
\end{equation}
Substituting \eqref{application of approx fun.eq5} to \eqref{application of approx fun.eq2} yields
\begin{equation}\label{application of approx fun.eq6}
\sum_{n=3}^{\infty}\mathscr{L}_{n^2-4}(1)\Phi(n,1)\ll
V^{1/2}T^2\log^2V+
V^{-1/2}\left(X^{\theta}T^2+X^{1/2+\theta}T\right).
\end{equation}

Now \eqref{estimate on sum L(1)} follows from \eqref{application of approx fun.eq6} by letting
\begin{equation*}
V=X^{\theta}(1+X^{1/2}/T).
\end{equation*}
\end{proof}
\section{Proof of Theorems \ref{mainthm} and \ref{thm:spec.exp.sum new bound2}}
Theorem \ref{mainthm}  follows directly from Lemma \ref{lemma estimates on the first error} and Lemma \ref{lemma estimate on the main error}.

Applying Theorem \ref{mainthm} to estimate the term $\mathcal{A}_1$ in \eqref{PrG1 =A+B+O} we obtain
\begin{multline}\label{spec.sumestimate1}
\sum_{j}\hat{\varphi}(t_j)\ll \mathcal{B}_1+
\max(X^{1/4+\theta/2}T^{3/2},X^{\theta/2}T^2)\log^2(XT)+\\
+O\left(N^{-1/2}X^{1/4+\theta}T^{3/2}\left(1+\frac{T}{X^{1/2}}\right)\right)
+\\
O\left(T^2\log^2T+\frac{N^{1/2}\log^2N}{X^{1/2}}\right).
\end{multline}

The part $\mathcal{B}_1$, defined by \eqref{B1 def}, can be bounded, for example, using \eqref{phihat expression} and the estimate of Luo and Sarnak \cite{LuoSarnakPG}. This yields
\begin{equation}\label{LuoSarnak Bbound for B1}
\mathcal{B}_1\ll N^{-1/2}T^{3}\log^2T.
\end{equation}
Nevertheless, as mentioned in the introduction, it is sufficient to use an analogue of the original result  of Iwaniec \eqref{Iwaniec Bbound}, namely
\begin{equation}\label{Iwaniec Bbound for B1}
\mathcal{B}_1\ll N^{-1/2}T^{7/2}\log^2T.
\end{equation}
Accordingly, substituting \eqref{Iwaniec Bbound for B1} to \eqref{spec.sumestimate1}, we obtain
\begin{multline}\label{spec.sumestimate2}
\sum_{j}\hat{\varphi}(t_j)\ll
\max(X^{1/4+\theta/2}T^{3/2},X^{\theta/2}T^2)\log^2(XT)+\\
\frac{T^{7/2}}{N^{1/2}}\log^2T
+N^{-1/2}X^{1/4+\theta}T^{3/2}\left(1+\frac{T}{X^{1/2}}\right)
+\\
T^2\log^2T+\frac{N^{1/2}\log^2N}{X^{1/2}}.
\end{multline}
Hence the optimal choice of $N$ is
\begin{equation}\label{N optimal}
N=T^{7/2}X^{1/2}+X^{3/4+\theta}T^{3/2}\left(1+\frac{T}{X^{1/2}}\right).
\end{equation}
It should be noted that $N$ is supposed to be an integer. Another important observation is that the summand $X^{3/4+\theta}T^{5/2}X^{-1/2}$ is negligible in comparison with the other two summands in \eqref{N optimal}. Thus we can simplify the choice of $N$ as follows
\begin{equation}\label{N choosen}
N=[T^{7/2}X^{1/2}+X^{3/4+\theta}T^{3/2}],
\end{equation}
where for a real number $x$, $[x]$ denotes the integral part of $x$. Substituting \eqref{N choosen} to \eqref{spec.sumestimate2}, we obtain
\begin{multline}\label{spec.sumestimate3}
\sum_{j}\hat{\varphi}(t_j)\ll
\max(X^{1/4+\theta/2}T^{3/2},X^{\theta/2}T^2)\log^2(XT)+\\
T^2\log^2T+
\left(
T^{7/4}X^{-1/4}+X^{-1/8+\theta/2}T^{3/4}
\right)\log^2(XT)\ll\\
\ll\max(X^{1/4+\theta/2}T^{3/2},X^{\theta/2}T^2)\log^2(XT).
\end{multline}

Applying \eqref{phihat expression} and the trivial bound $$\sum_{t_j}X^{it_j}\exp(-t_j/T)\ll T^2,$$ we prove that
\begin{equation}\label{spec.sumestimate2}
\sum_{t_j}t_jX^{it_j}\exp(-t_j/T)\ll \max\left(
X^{1/4+\theta/2}T^{3/2},
X^{\theta/2}T^2
\right)\log^{2}(XT).
\end{equation}
Now \eqref{spec.exp.sum new bound3} follows from \eqref{spec.sumestimate2}. See \cite[p. 235-236]{LuoSarnakPG} for details.


\section*{Acknowledgements}
The authors are grateful to the referee for very careful reading and comments that improved the exposition.



\nocite{}


\begin{thebibliography}{}

\bibitem{BF}
O. Balkanova and D. Frolenkov, \emph{Bounds for a spectral exponential sum}, J. London Math. Soc., published online, doi: 10.1112/jlms.12174 .

\bibitem{BE}
\newblock
H. Beitman and A. Erdelyi, \emph{Tables of integral transforms}, Vol. 1, McGraw-Hill, New York, 1954.


\bibitem{B}
\newblock
V. A. Bykovskii, \emph{Density theorems and the mean value of arithmetic functions on short intervals}. (Russian) Zap. Nauchn. Sem. S.-Peterburg. Otdel. Mat. Inst. Steklov. (POMI) 212 (1994), Anal. Teor. Chisel
i Teor. Funktsii. 12, 56--70, 196; translation in J. Math. Sci. (New York) 83 (1997), no. 6, 720--730.



\bibitem{Cai}
\newblock
Y. Cai, \emph{Prime geodesic theorem}, J. Theor.  Nombres Bordeaux 14:1 (2002), 59--72.


\bibitem{CI}
\newblock
J. B. Conrey and H. Iwaniec, \emph{The cubic moment of central values of automorphic
$L$-functions}, Ann. of Math. (2) 151 (2000), 1175--1216.

\bibitem{DesIw}
\newblock
J. M. Deshouillers and H. Iwaniec, \emph{The non-vanishing of Rankin-Selberg zeta-functions at special points}, Selberg trace formula and related topics, Contemp.  Math., 53, Amer. Math. Soc., Providence, RI, (1986), 59--95.



\bibitem{GR}
\newblock
I. S. Gradshteyn and I. M. Ryzhik, \emph{ Table of Integrals, Series, and Products}. Edited by A. Jeffrey and D. Zwillinger. Academic Press, New York, 7th edition (2007).

\bibitem{HMF}
F.W.J.~Olver , D.W.~Lozier, R.F.~Boisvert and C.W.~Clarke, \emph{{NIST} {H}andbook of {M}athematical {F}unctions}, Cambridge University Press, Cambridge (2010).

\bibitem{Iw}
\newblock
 H. Iwaniec, \emph{Topics in Classical Automorphic Forms}, Graduate studies in mathematics (vol 17), American Mathematical Soc., 1997.

\bibitem{IwPG}
\newblock
 H. Iwaniec, \emph{Prime geodesic theorem}, J. Reine. Angew. Math. 349 (1984), 136--159.


\bibitem{K}
N.V. Kuznetsov, \emph{An arithmetical form of the Selberg trace formula and the distribution of norms of primitive hyperbolic classes of the modular group}, in Russian, preprint Khabarovsk (1978).




\bibitem{LuoSarnakPG}
\newblock
 W. Luo and P. Sarnak, \emph{Quantum ergodicity of eigenfunctions on $PSL_2(Z)/H^2$}, Pub. math. de l'I.H.E.S. 81 (1995), 207--237.

\bibitem{PetRisLaak}
\newblock
Y. N. Petridis and M. S. Risager, \emph{Local average in hyperbolic lattice point counting, with an appendix by Niko Laaksonen.}  Math. Z. 285 (2017), no. 3-4, 1319--1344.

\bibitem{SY}
K. Soundararajan and  M. P. Young, \emph{The prime geodesic theorem}, J. Reine Angew. Math. 676 (2013), 105--120.

\bibitem{Y}
\newblock
M.P. Young, \emph{Weyl-type hybrid subconvexity bounds for twisted $L$-functions and Heegner points on shrinking sets}, J. Eur. Math. Soc. 19 (2017), 1545--1576.


\bibitem{Z}
\newblock
D. Zagier, \emph{ Modular forms whose Fourier coefficients involve zeta-functions of quadratic fields}. Modular
functions of one variable, VI (Proc. Second Internat. Conf., Univ. Bonn, Bonn, 1976), pp. 105--169.
Lecture Notes in Math., Vol. 627, Springer, Berlin, 1977.



\end{thebibliography}
\end{document}